\def\N{{\mathscr{N}}}
\def\NN{{\mathbb{N}}}
\def\ZZ{{\mathbb{Z}}}
\def\QQ{{\mathbb{Q}}}
\def\C{{\mathscr{C}}}
\def\K{{\mathscr{K}}}
\def\PP{{\mathbb{P}}}
\def\E{{\mathscr{E}}}
\def\O{{\mathscr{O}}}
\def\U{{\mathscr{U}}}
\def\Q{{\mathscr{Q}}}
\def\L{{\mathscr{L}}}
\def\S{{\mathscr{S}}}
\newtheorem{them}{Theorem}[section]
\newtheorem{pro}[them]{Proposition}
\newtheorem{lem}[them]{Lemma}
\newtheorem{rem}[them]{Remark}
\newtheorem{defi}[them]{Definition}
\newtheorem{conj}[them]{Conjecture}
\newtheorem{nota}[them]{Notation}
\newtheorem{Ass}[them]{Assumptions}
\begin{document}

\title[$\PP^1$-bundles over projective manifold of Picard number one]{$\PP^1$-bundles over projective manifolds of Picard number one each of which admit another smooth morphism of relative dimension one}
\author{Kiwamu Watanabe}
\date{February 7, 2012}

\address{Course of Mathematics, Programs in Mathematics, Electronics and Informatics, 
Graduate School of Science and Engineering, Saitama University.
Shimo-Okubo 255, Sakura-ku Saitama-shi, 338-8570 JAPAN}
\email{kwatanab@rimath.saitama-u.ac.jp}

\subjclass[2000]{Primary~14J45, 14J60, Secondary~14M17.}
\keywords{$\PP^1$-bundle, rank 2 vector bundle, Fano manifold, homogeneous manifold}

\maketitle



\begin{abstract}
We give the complete classification of $\PP^1$-bundles over projective manifolds of Picard number one each of which admit another smooth morphism of relative dimension one. 

\end{abstract}

\tableofcontents

\section{Introduction}
R. Mu$\rm \tilde{n}$oz, G. Occhetta and L. Sol$\rm \acute{a}$ Conde studied rank $2$ vector bundles on Fano manifolds in \cite{MOS}. In their paper \cite[Theorem~6.5]{MOS}, they obtained a complete list of $\PP^1$-bundles over Fano manifolds with $b_2=b_4=1$ that have another second $\PP^1$-bundle structure. 
The purpose of this paper is to generalize their result. Actually, we give the complete classification of $\PP^1$-bundles over projective manifolds of Picard number $1$ 
each of which admit another smooth morphism of relative dimension $1$.
Our main result is the following:

\begin{them}\label{MT} 

Let $X$ be a complex projective manifold of Picard number $\rho=1$ and $\E$ a rank $2$ vector bundle on $X$. Assume that $Z:=\PP(\E) \rightarrow X$ admits another smooth morphism $Z \rightarrow Y$ of relative dimension $1$ and $n:=\dim X \geq 2$. Then,
\begin{enumerate}
\item[{\rm (I)}] $X$ and $Y$ are Fano manifolds of $\rho=1$ and there exists a rank $2$ vector bundle $\E'$ on $Y$ such that $Z \rightarrow Y$ is given by $\PP_Y(\E')$.   
\end{enumerate}
Furthermore,  
\begin{enumerate}
\item[{\rm (II)}]  if $\E$ and $\E'$ are normalized by twisting with line bundles (i.e., $c_1=0$ or $-1$), then $((X, \E), (Y, \E'))$ is one of the following, up to changing the pairs $(X, \E)$ and $(Y, \E')$:
\begin{enumerate}
\item $((\PP^2, T_{\PP^2}), (\PP^2, T_{\PP^2}))$, where $T_{\PP^2}$ is the tangent bundle of the projective plane $\PP^2$,
\item $((\PP^3, \N),(Q^3, \S))$, where $\N$ is the null-correlation bundle on $\PP^3$ (see \cite{OSS}) and $\S$ is the restriction to the $3$-dimensional quadric $Q^3$ of the universal quotient bundle of the Grassmannian $G(1,\PP^3)$,
\item $((Q^5, \C), (K(G_2), \Q))$, where $\C$ is a Cayley bundle on $Q^5$ (see \cite{Ota}), $K(G_2)$ is the $5$-dimensional Fano homogeneous contact manifold of type $G_2$ which is a linear section of the Grassmannian $G(1,\PP^6)$ and $\Q$ the restriction of the universal quotient bundle on $G(1,\PP^6)$. 
\end{enumerate}
\end{enumerate}

Consequently, $Z$ is the full-flag manifold of type $A_2$, $B_2$ or $G_2$.  
In particular, $X$, $Y$ and $Z$ are rational homogeneous manifolds. 
\end{them}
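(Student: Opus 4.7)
The plan is to establish the theorem in three stages: (I) prove that $\pi'\colon Z\to Y$ is itself a $\PP^1$-bundle $\PP_Y(\E')$ with $Y$ Fano of $\rho=1$; (II) use numerical invariants on $Z$ to pin down the possible pairs; and (III) recognize the resulting $Z$'s as full flag manifolds.

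For (I), since $\pi$ is a $\PP^1$-bundle, $\rho(Z)=\rho(X)+1=2$, so $\overline{NE}(Z)$ has exactly two extremal rays. The fiber class $[F]$ of $\pi$ generates one ray $R_1$ and satisfies $-K_Z\cdot F=2$. Let $G$ be a fiber of $\pi'$. The classes $[F]$ and $[G]$ are linearly independent in $N_1(Z)_{\RR}$: if they were proportional then $\pi$ and $\pi'$ would share a Stein factorization, contradicting that they are distinct. Hence $[G]$ generates the second ray $R_2$ and $\pi'|_F$ is non-constant, so $\pi'(F)$ is a rational curve in $Y$; letting $F$ vary over fibers of $\pi$, these images cover $Y$ and show $Y$ is uniruled. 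The main technical step is then to prove that the fibers of $\pi'$ are $\PP^1$, i.e.\ that $R_2$ is also $K_Z$-negative. Once this is done, $Z$ is Fano, both $X$ and $Y$ are Fano as targets of the Mori contractions of $R_1$ and $R_2$, $\rho(Y)=1$, and $\pi'$ is a smooth $\PP^1$-bundle. Upgrading $\pi'$ to a projective bundle $\PP_Y(\E')$ amounts to vanishing of the Brauer obstruction in $H^2(Y,\O_Y^\ast)_{\mathrm{tors}}$, which holds a posteriori because each candidate $Y$ in (II) is simply connected with torsion-free integral cohomology.

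For (II), after normalizing $\E,\E'$ so that $c_1\in\{0,-1\}$, let $H_X,H_Y$ be the ample generators of $\mathrm{Pic}(X)$ and $\mathrm{Pic}(Y)$ and $\xi,\xi'$ the tautological classes of the two bundle structures. Then $\mathrm{Pic}(Z)$ has two natural bases $\{\xi,\pi^*H_X\}$ and $\{\xi',(\pi')^*H_Y\}$, related by a $\mathrm{GL}_2(\ZZ)$ change-of-basis matrix. Substituting into $K_Z=-2\xi+\pi^*(K_X+\det\E)=-2\xi'+(\pi')^*(K_Y+\det\E')$, and using the identities $-K_Z\cdot F=-K_Z\cdot G=2$, restricts the Fano indices of $X,Y$ and the dimension $n$ to a small finite list. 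Kobayashi--Ochiai together with the classification of Fano manifolds of $\rho=1$ and small coindex then leaves only $X,Y\in\{\PP^2,\PP^3,Q^3,Q^5,K(G_2)\}$; combining with the requirement that the rank 2 bundle admit a second smooth $\PP^1$-structure singles out $\E\in\{T_{\PP^2},\N,\S,\C,\Q\}$, giving exactly the three pairs listed in (1)--(3).

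For (III), in each of (1)--(3) the variety $Z$ is determined up to isomorphism by its double $\PP^1$-bundle diagram with the prescribed endpoints, and is well-known to be $\mathrm{SL}_3/B$, $\mathrm{Sp}_4/B$, or $G_2/B$, the full flag manifolds of types $A_2$, $B_2$, $G_2$; consequently $X,Y,Z$ are all rational homogeneous. The principal difficulty I anticipate is in step (I), specifically proving rationality of the fibers of $\pi'$ without a priori knowing $Z$ is Fano: this genus-zero conclusion must be extracted from the two-ray structure of $\overline{NE}(Z)$ together with the uniruledness of $Y$, perhaps by comparing the relative tangent line bundles $T_\pi,T_{\pi'}\subset T_Z$ and their degrees along generic fibers of each structure.
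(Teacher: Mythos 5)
Your outline reproduces the architecture of the argument but leaves its two hardest steps unproved, and in one place the logic is circular. First, you correctly isolate the rationality of the fibers of $Z\to Y$ as the main difficulty of part (I), but you only ``anticipate'' it; it is settled by a concrete argument you do not supply: if some (hence every) fiber of $\phi\colon Z\to Y$ had genus $\geq 1$, then $\phi$ pulled back over a fiber $f\cong\PP^1$ of $\pi$ would be an isotrivial, hence (by simple connectedness of $\PP^1$) trivial family, so that $\pi(\phi^{-1}(y))$ would be constant as $y$ runs over $\phi(\pi^{-1}(x))$; a chain-connectedness lemma (iterating $V\mapsto \phi(\pi^{-1}(\pi(\phi^{-1}(V))))$ and using $\rho_Y=1$ to show the chains fill up $Y$) then forces $\pi(\phi^{-1}(y))$ to be independent of $y\in Y$, contradicting surjectivity of $\pi$. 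Nothing in your two-ray or relative-tangent-bundle suggestion obviously substitutes for this. Second, your treatment of the Brauer obstruction is circular: you verify it ``a posteriori because each candidate $Y$ in (II) is simply connected,'' but your derivation of the list in (II) already uses the tautological class $\xi'$ of $\PP_Y(\E')$, i.e.\ presupposes that $\E'$ exists. The correct route is to produce $\E'$ directly as $\phi_{\ast}\O_Z(V)$ for a line bundle $V$ with $V\cdot G=1$, and finding such a $V$ requires the numerical case analysis to already be in hand.

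The deeper gap is in stage (II). The identities $-K_Z\cdot F=-K_Z\cdot G=2$ are automatic for any such $Z$ and impose no restriction on $n$ or on the indices, and Kobayashi--Ochiai is of no use without an a priori bound on the coindex. The actual restriction comes from two independent computations of the discriminant $\Delta(\E)=c_1^2(\E)-4c_2(\E)$, neither of which appears in your proposal: (a) one first proves $A_2(X)_\QQ\cong N^2(X)_\QQ\cong\QQ$ (via the chain construction and Fulton's description of Chow groups of $\PP^1$-bundles), which makes $\Delta(\E)$ a rational multiple $\Delta$ of $H_X^2$; then the relation $(-K_\pi+\tau H)^{n+1}=0$, coming from the nef-but-not-big class defining $\phi$, expands to $\sqrt{-\Delta}=\tau\tan\bigl(\tfrac{\pi}{n+1}\bigr)$, and the rationality of $\tan^2\bigl(\tfrac{\pi}{n+1}\bigr)$ forces $n\in\{2,3,5\}$; (b) a second computation of $c(\pi^{\ast}\E)$ via its restrictions to the two rulings gives $\Delta=\tau^2-\frac{4\tau}{e\mu}$. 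Comparing (a) and (b) yields the finite list of quadruples $(n,i_X,\mu,e)$ on which both the construction of $\E'$ and the final identification of the pairs rest. Without these ingredients your stage (II) does not terminate, so as written the proposal is a plausible roadmap rather than a proof.
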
 


Another motivation of our main result is the following conjecture proposed by Campana and Peternell: 
\begin{conj}[{\cite{CP}}] A Fano manifold $M$ with nef tangent bundle is homogeneous.
\end{conj}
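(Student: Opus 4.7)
The plan is to attack the conjecture by induction on the Picard number $\rho(M)$, reducing eventually to Picard number one, where one must then show that $M$ is rational homogeneous of type $G/P$. If $\rho(M) \geq 2$, pick an extremal contraction $\varphi \colon M \to M'$. Nefness of $T_M$ forces $\varphi$ to be a smooth morphism with rationally connected fibres, by arguments of Demailly--Peternell--Schneider and Sol\'a Conde--Wi\'sniewski; moreover $M'$ is again Fano with nef tangent bundle, and each fibre $F$ is Fano with nef tangent bundle of strictly smaller dimension. By induction both $M'$ and $F$ are homogeneous, and the task becomes to recognize the total space $M$ as a homogeneous fibre bundle over $M'$. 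In the special case $\dim F = 1$, when $M'$ itself admits a second smooth $\mathbb{P}^1$-fibration, the Main Theorem above identifies $M$ with one of the full-flag manifolds of type $A_2$, $B_2$, or $G_2$, ruling out exotic assemblies in this slice of the induction.

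Having reduced to $\rho(M) = 1$, the strategy is to identify $M$ with a rational homogeneous $G/P$ via its variety of minimal rational tangents (VMRT). Fix a minimal dominating family $\mathcal{K}$ of rational curves; since $-K_M$ is ample and $T_M$ is nef, the restriction of $T_M$ to a general member of $\mathcal{K}$ has splitting type $(2,1,\ldots,1,0,\ldots,0)$, so the VMRT $\mathcal{C}_x \subset \mathbb{P}(T_{M,x}^{\vee})$ at a general point $x$ is smooth of the expected dimension. One then aims to show that $\mathcal{C}_x$ is projectively equivalent to the VMRT of a Picard number one rational homogeneous $G/P$, and invokes the Cartan--Fubini type extension theorem of Hwang--Mok to propagate this equivalence to a biholomorphism $M \cong G/P$.

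The hard part is the identification of $\mathcal{C}_x$ with a known homogeneous model. The natural sub-strategy is inductive: $\mathcal{C}_x$ is itself smooth and Fano, and one hopes to show its tangent bundle is again nef so as to recurse on dimension. This nefness is not automatic, and controlling the projective second fundamental form of $\mathcal{C}_x$ so as to force it into Mok's classification of homogeneous VMRTs is the technical heart of the problem; this is where the argument currently breaks down in high dimension, and why the conjecture remains open beyond the low-dimensional cases settled by Campana--Peternell and Mok. The Main Theorem of this paper contributes a complete understanding of one recurring building block, namely two transverse smooth $\mathbb{P}^1$-fibrations on a common total space, which is precisely the input needed to handle the $A_2$, $B_2$, and $G_2$ configurations arising both as candidate VMRTs and as fibres in the extremal contraction step.
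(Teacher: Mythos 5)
The statement you are asked about is precisely the Campana--Peternell conjecture, and the paper does not prove it: it is quoted as an open conjecture (known only in dimension $\leq 4$), and the paper's actual contribution is Theorem~\ref{MT} together with the special case Theorem~\ref{HM}. Your proposal is likewise not a proof but a strategy outline, and you say as much yourself (``this is where the argument currently breaks down in high dimension, and why the conjecture remains open''). A text that concedes its central step is unestablished cannot be accepted as a proof of the statement, so the verdict is that there is a genuine gap --- in fact two.

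First, in the reduction for $\rho(M)\geq 2$: even granting the (known) facts that every extremal contraction $\varphi\colon M\to M'$ is a smooth fibration whose fibres and target are again Fano with nef tangent bundle, the inductive hypothesis only tells you that $M'$ and the fibre $F$ are homogeneous. Homogeneity of the total space of a fibration with homogeneous base and fibre is false in general and requires a separate argument (one must produce enough global vector fields on $M$, i.e.\ show $H^0(M,T_M)$ surjects onto $T_{M,x}$ at every point); you do not supply this, and Theorem~\ref{MT} only handles the very particular configuration of two transverse $\PP^1$-fibrations on one space of Picard number two. Second, and more fundamentally, in the $\rho(M)=1$ case the identification of the variety of minimal rational tangents with that of a homogeneous model $G/P$ is exactly the open problem: the nefness of the tangent bundle of the VMRT needed to recurse is not known, the classification of possible second fundamental forms is not available beyond small $p=\dim\C_x$, and the Cartan--Fubini extension theorem can only be invoked after that identification is made. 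Mok's theorem (Theorem~\ref{HM} here) settles only the case $p=1$, which is what this paper recovers. So your outline correctly locates where the difficulty sits, but it does not close it, and no rearrangement of the cited ingredients does.
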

This conjecture is true in dimension $\leq 4$. The most difficulty lies in the case where $M$ is a Fano $4$-fold of $\rho=1$ which carries a rational curve $C$ with $-K_M.C=3$. In this case, N. Mok \cite{Mok} proved the conjecture under the additional assumption that $b_4(M)=1$.    
 Later on, J. M. Hwang pointed out that the assumption $b_4(M)=1$ can be removed in \cite{Hwang}. More generally, they obtained the following result. We will also prove this as a corollary of Theorem~\ref{MT}.
\begin{them}[{\cite[Main Theorem]{Mok}, \cite{Hwang}}]\label{HM} Let $M$ be a Fano manifold of $\rho=1$ with nef tangent bundle. Assume that $M$ carries a rational curve $C$ such that $-K_M.C=3$. Then $M$ is isomorphic to $\PP^2$, $Q^3$ or $K(G_2)$.
\end{them}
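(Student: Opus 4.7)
The plan is to derive Theorem~\ref{HM} from Theorem~\ref{MT} by constructing, out of a minimal family of rational curves on $M$, a $\PP^1$-bundle carrying a second smooth morphism of relative dimension one, and then reading $M$ off the resulting short list.

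First, I let $\K$ be an irreducible component of the space of rational curves on $M$ whose general member $\ell$ is deformation-equivalent to the given curve $C$, and denote by $\pi\colon \U \to \K$ the universal family and by $\mathrm{ev}\colon \U \to M$ the evaluation morphism. For a general $[\ell]\in \K$, nefness of $T_M$ forces
\[
 T_M|_\ell \;\cong\; \O(2)\oplus \O(a_1)\oplus\cdots\oplus \O(a_{n-1}), \qquad a_i\geq 0,
\]
and the degree equation $\sum a_i = -K_M\cdot \ell - 2 = 1$ then gives, after reordering, $a_1 = 1$ and $a_i = 0$ for $i\geq 2$. In particular the normal bundle of $\ell$ is $\O(1)\oplus \O^{\oplus(n-2)}$, every member of $\K$ is free, $\K$ is smooth of the expected dimension at $[\ell]$, and $\pi$ is a $\PP^1$-bundle over a smooth base.

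Second, I verify that $\mathrm{ev}$ is smooth of relative dimension one. Nefness of $T_M$ along every curve of the family prevents any jumping of the normal-bundle type, so $d\mathrm{ev}$ is surjective along each fiber and every fiber $\K_x$ is smooth of dimension one; after passing to the first factor of a Stein factorization of $\mathrm{ev}$ if needed, these fibers may also be assumed connected. Combining the smooth, rational-fibered morphism $\mathrm{ev}$ with $\rho(M) = 1$ gives $\rho(\U)\leq 2$; together with $\rho(\U) = \rho(\K) + 1$ coming from the $\PP^1$-bundle $\pi$, this yields $\rho(\K) = 1$.

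Third, I invoke Theorem~\ref{MT} with $X = \K$, giving $\U = \PP(\E)$ for a rank two bundle $\E$ on $\K$ and forcing $(\K,M)$, up to swap, to be one of the three pairs $(\PP^2,\PP^2)$, $(\PP^3,Q^3)$ or $(Q^5,K(G_2))$. To pin down $M$ I then use $-K_M\cdot C = 3$: among the five manifolds appearing, only $\PP^2$, $Q^3$ and $K(G_2)$ have Fano index equal to $3$ and therefore admit lines of anticanonical degree $3$, while $\PP^3$ has index $4$ and $Q^5$ has index $5$. Hence $M\cong \PP^2$, $Q^3$, or $K(G_2)$, as desired.

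The main obstacle is the smoothness of $\mathrm{ev}$ everywhere on $\U$ (not merely at the generic point of each fiber), which is precisely where the full force of the nefness of $T_M$ enters: it rules out negative summands in the normal bundle of every, not just a general, curve of the family, and this rigidity is the technical input extracted from the works of Mok and Hwang cited in the statement. A minor secondary point is the reduction to the case that $3$ is actually the minimum anticanonical degree in $\K$, which follows from the observation that the Fano index of $M$ divides $-K_M\cdot C = 3$ together with a short case analysis ruling out low-index possibilities.
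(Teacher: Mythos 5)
Your proposal follows essentially the same route as the paper: take the (minimal) family of rational curves through $C$, use nefness of $T_M$ (via the results of Mok and Hwang that the paper cites directly) to make the universal family $\U$ a $\PP^1$-bundle over $\K$ with smooth $\PP^1$-fibered evaluation map onto $M$, deduce $\rho(\K)=1$, and apply Theorem~\ref{MT}. Your final step of singling out $M$ inside each pair by matching $-K_M\cdot C=3$ against the Fano indices, and your remark on reducing to the case where $3$ is the minimal degree, are points the paper leaves implicit, but the argument is the same.
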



The contents of this paper are organized as follows. 
Section~2 is devoted to study the structures of the Chow group $A_2(X)_{\QQ}$ of $2$-dimensional cycles with $\QQ$-coefficients and its quotient $N^2(X)_{\QQ}$ by numerical equivalence, according to a similar argument as in \cite{Hwang}. In Section~3, we give two computations of the discriminant $\Delta(\E):=c_1^2(\E)-4c_2(\E)$ which are based on ideas in \cite{Mok} and \cite{MOS} (see Proposition~\ref{key} and \ref{delta}). In Section~4, by comparing the computational results, we narrow down the possible values of some invariants of $X$ and $Y$. Then we can show the existence of a rank $2$ vector bundle $\E'$ on $Y$ such that $Z=\PP_Y(\E')$ (Proposition~\ref{n35}). The main novelty of this paper is to show the existence of $\E'$ by using the above two computational results of the discriminant. After this paper was submitted, the draft of \cite{MOS} was revised and they obtained the same result (see \cite[Lemma~6.2]{MOS}). Since $Z$ admits double $\PP^1$-bundle structures $\pi: Z \rightarrow X$ and $\phi: Z \rightarrow Y$, the same argument as in \cite{MOS} implies Theorem~\ref{MT} (Theorem~\ref{conc}). In the final section, we will show Theorem~\ref{HM} as a corollary of Theorem~\ref{MT}.

In this paper, we use notation as in \cite{Ha} and every point on a variety we deal with is a closed point. We work over the field of complex numbers. \\  

{\bf Acknowledgements.} 
This work was done while the author was visiting the University of Freiburg. The author would like to express his gratitude to his host Prof. Stefan Kebekus for all his assistance and his kindness. He would also like to thank the staff in the department of mathematics at the University of Freiburg, especially Patrick Graf for his support and encouragement. My deepest appreciation goes to Luis E. Sol$\rm \acute{a}$ Conde and Roberto Mu$\rm \tilde{n}$oz for inviting me to visit Universidad Rey Juan Carlos. The author would like to express his gratitude to Gianluca Occhetta for sending the final version of their paper \cite{MOS}.
 The author is partially supported by Research Fellowships of the Japan Society for the Promotion of Science for Young Scientists.

\section{A Part of Theorem~\ref{MT}~(I) and Structures of $A_2(X)_{\QQ}$ and $N^2(X)_{\QQ}$}

First, we partially prove Theorem~\ref{MT}~(I). We will complete the proof of Theorem~\ref{MT}~(I) in Proposition~\ref{n35}. 
\begin{pro}\label{MTI} 
Let $X$ be a projective manifold of $\rho_X=1$ and $\E$ a rank $2$ vector bundle on $X$. Assume that $\pi: Z=\PP(\E) \rightarrow X$ admits another smooth morphism $\phi: Z \rightarrow Y$ of relative dimension $1$ and $n:=\dim X \geq 2$. Then 
\begin{enumerate}
\item $Y$ is a Fano manifold of $\rho_Y=1$, 
\item $\phi^{-1}(y)$ is isomorphic to $\PP^1$ for every $y \in Y$, and
\item $X$ is also a Fano manifold of $\rho_X=1$.
\end{enumerate}   
\end{pro}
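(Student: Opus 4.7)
My plan is to prove the three items in the stated order, after reducing to a convenient setup via Stein factorization.

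I would first pass to the Stein factorization of $\phi$ so that, without loss of generality, $\phi$ has connected fibers; these are then smooth projective curves of a common genus $g$. Projectivity of $Y$ follows by standard descent: $\phi$ is a projective morphism since its fibers are smooth projective curves, and combining a $\phi$-relatively ample line bundle with a suitable pullback descends to an ample class on $Y$. For the Picard number: since $\pi$ is a $\PP^1$-bundle over $X$ with $\rho_X = 1$, we have $\rho_Z = 2$. The pullback $\phi^*\colon N^1(Y)_{\RR} \hookrightarrow N^1(Z)_{\RR}$ is injective (connected fibers) with image contained in the hyperplane $[F_\phi]^{\perp}$, which has dimension $\rho_Z - 1 = 1$; combined with $\rho_Y \geq 1$ from positive dimensionality, this forces $\rho_Y = 1$.

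For the fiber type, I would analyze the $2$-dimensional Mori cone $\overline{NE}(Z)$: its extremal rays are $R_\pi = \RR_{\geq 0}[F_\pi]$ (from the $\PP^1$-bundle $\pi$, satisfying $-K_Z \cdot F_\pi = 2$) and $R_\phi = \RR_{\geq 0}[F_\phi]$ (from $\phi$, since $\rho_Z - \rho_Y = 1$). Adjunction with trivial normal bundle on a $\phi$-fiber gives $-K_Z \cdot F_\phi = 2 - 2g$, so the task reduces to showing $g = 0$. I would proceed by a dichotomy: if $\phi$ contracts every $\pi$-fiber, then $\phi$ factors through $\pi$ via a finite map $\psi\colon X \to Y$ which Stein factorization renders an isomorphism, so that $\phi^{-1}(y) = \pi^{-1}(\psi^{-1}(y)) \cong \PP^1$ directly; otherwise, generic $\pi$-fibers have non-constant image in $Y$, producing rational curves in $Y$ and making $Y$ uniruled, hence Fano by $\rho_Y = 1$. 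In this nontrivial branch I would combine the explicit formula $-K_Z = \pi^{*}(-K_X - c_1(\E)) + 2\xi$ with the $\pi$-relative ampleness of $\xi$ to extract $-K_Z \cdot F_\phi > 0$ from positivity estimates on $\pi^{*}H_X \cdot F_\phi$ and $\xi \cdot F_\phi$, together with the Fano-ness of $Y$ just established.

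Once $g = 0$ is in hand, Fano-ness of $X$ follows quickly: $X$ is uniruled by the $\pi$-fibers, and a uniruled smooth projective variety of Picard number one is automatically Fano, since its canonical class is a rational multiple of the ample generator and uniruledness excludes a nonnegative coefficient. The main obstacle I anticipate is the genus determination in the nontrivial branch of the dichotomy: a purely Mori-theoretic cone argument would be circular (since $R_\phi$ being $K_Z$-negative is equivalent to $g = 0$), so one genuinely needs to exploit the ambient $\PP^1$-bundle structure of $\pi$ rather than just the existence of the two extremal contractions.
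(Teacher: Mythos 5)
Your treatment of (i) and (iii) is essentially the paper's: $\rho_Z=2$ forces $\rho_Y=1$, and a uniruled projective manifold of Picard number one is Fano because $K$ is a rational multiple of the ample generator and cannot be pseudoeffective; the degenerate branch in which $\phi$ contracts the $\pi$-fibers is dispatched the same way. The problem is the heart of (ii), the genus determination, where your proposed mechanism does not close. You want to show $-K_Z\cdot F_\phi=2-2g>0$ by writing $-K_Z=\pi^{*}(-K_X)-K_\pi=\pi^{*}(-K_X-c_1(\E))+2\xi$ and invoking ``positivity estimates.'' But $\pi^{*}(-K_X)\cdot F_\phi=-K_X\cdot\pi_{*}F_\phi$ has unknown sign at this stage: in your own ordering the Fano-ness of $X$ is only deduced \emph{after} $g=0$, and $\rho_X=1$ alone allows $K_X$ to be nef. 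Worse, $\xi\cdot F_\phi$ is typically \emph{negative} — in the model case $(\PP^2,T_{\PP^2})$ one has $\xi\cdot F_\phi=-1$ — and the only way to control it is to restrict $\E$ to $F_\phi$ and use the Grothendieck splitting on $\PP^1$, which presupposes $g=0$. Fano-ness of $Y$ buys nothing here either: $-K_Z\cdot F_\phi=-K_{Z/Y}\cdot F_\phi=2-2g$ is a tautology, and $-K_{Z/Y}\cdot F_\pi$ constrains the wrong ray. So the step you yourself flag as the main obstacle is exactly where the argument is circular, and the $\PP^1$-bundle structure is not being exploited in a way that breaks the circle.

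The paper's actual route is quite different and is worth recording. Suppose some (hence every) fiber of $\phi$ has genus $g\geq 1$. Pull back $\phi$ along a $\pi$-fiber $f\cong\PP^1$ via $\nu=\phi|_f$: the resulting smooth family of positive-genus curves over $\PP^1$ is isotrivial (constant $j$-invariant for $g=1$, Arakelov--Parshin for $g\geq 2$), hence trivial by simple connectedness of $\PP^1$. Triviality, together with the fact that the section of $\nu^{*}Z\to f$ corresponding to $f$ itself is contracted by $\pi$ and that the horizontal curves of the product are numerically equivalent, forces $\pi(\phi^{-1}(y))$ to be one and the same curve for all $y\in\phi(f)$. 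A chain-connectedness lemma (the $V_y^k$ construction of Lemma~\ref{V}, an ingredient absent from your plan) then propagates this identity to all of $Y$, so $\pi(Z)=X$ would be a single curve, contradicting $n\geq 2$. This is the idea you are missing; without it, or a genuine substitute, the proof of (ii) does not go through. (The final upgrade from ``rational fibers'' to ``$\phi^{-1}(y)\cong\PP^1$'' via connectedness is handled by both you and the paper through simple connectedness of the Fano $Y$, which is fine.)
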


It is easy to see $\rm (i)$. In fact, since $\rho_Z=2$, it turns out that $\rho_Y=1$. Additionally, $Y$ is covered by rational curves which are the images of fibers of $\pi$. Thus $Y$ is a Fano manifold of $\rho_Y=1$.

We use the following lemma to prove Proposition~\ref{MTI} $\rm (ii)$: 

\begin{lem}\label{V} Let $X$ be a projective manifold and $Y$ a Fano manifold of $\rho=1$. Assume that a projective manifold $Z$ admits two different smooth morphisms $\pi: Z \rightarrow X$ and $\phi: Z \rightarrow Y$ of relative dimension $1$ and $\pi^{-1}(x)$ is isomorphic to $\PP^1$ for every $x \in X$.  Given $y \in Y$, define inductively 
\begin{enumerate}
\item $V_y^0:=\{y\}$, and 
\item $V_y^{m+1}:={\phi}({\pi}^{-1}({\pi}({\phi}^{-1}(V_y^m))))$.
\end{enumerate}  
Then there exists a natural number $l$ such that $V_y^l=Y$.
\end{lem}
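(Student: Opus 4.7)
The plan is to combine a Noetherian stabilization argument with the hypothesis $\rho_Y=1$ to force the eventual equality $V_y^l=Y$. First I would verify that each $V_y^m$ is a closed subvariety of $Y$: preimages under the continuous maps $\pi$ and $\phi$ are closed, and images under these maps are also closed since $\pi$ and $\phi$ are proper (both being morphisms of projective varieties). The sequence is moreover non-decreasing, since $\phi^{-1}(V_y^m)\subset\pi^{-1}(\pi(\phi^{-1}(V_y^m)))$ and applying $\phi$ yields $V_y^m\subset V_y^{m+1}$. By the Noetherianity of $Y$, the chain must stabilize at some closed subvariety $V:=V_y^l$ for all $l\gg 0$. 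Geometrically, $V_y^m$ is the set of points in $Y$ reachable from $y$ by a chain of at most $m$ rational curves from the family $\mathcal{F}:=\{\phi(\pi^{-1}(x)):x\in X\}$, and the stabilization translates into the invariance $\phi^{-1}(V)=\pi^{-1}(W)$, with $W:=\pi(\phi^{-1}(V))\subset X$; equivalently, every curve in $\mathcal{F}$ that meets $V$ lies entirely in $V$.

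The main step is to show $V=Y$. Since $\phi$ is surjective, $\mathcal{F}$ covers $Y$; and under the assumption that $\pi,\phi$ are genuinely distinct (so that $\phi$ does not factor through $\pi$) a general member of $\mathcal{F}$ is a non-constant rational curve, making $\mathcal{F}$ a covering family of rational curves on $Y$. I would then invoke Campana's chain-connectedness quotient: there exists a proper rational map $q:Y\dashrightarrow B$ whose general fibers are precisely the chain-connected components of $\mathcal{F}$. If $\dim B>0$, then after resolving indeterminacies, the pullback $q^*H_B$ of an ample class from $B$ yields a non-zero class in $N^1(Y)_{\QQ}$ which pairs trivially with curves of $\mathcal{F}$ (they are contracted by $q$), whereas any ample class of $Y$ pairs positively with such curves; this would give two linearly independent classes in $N^1(Y)_{\QQ}$, contradicting $\rho_Y=1$. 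Therefore $\dim B=0$, i.e., $Y$ is chain-connected by $\mathcal{F}$, so $\bigcup_m V_y^m=Y$. Combined with the stabilization, $V=Y$.

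The main difficulty lies in the appeal to Campana's quotient theorem, together with the exclusion of the degenerate case in which $\phi(\pi^{-1}(x))$ is a point for every $x$ (in that case $\phi$ would factor through $\pi$ via an \'etale map, and $V_y^m$ would remain $\{y\}$ for all $m$); this degenerate case must be ruled out by interpreting the hypothesis that $\pi$ and $\phi$ are \emph{different} smooth morphisms as excluding such a factorization. An alternative more elementary approach would try to show that the subset $W\subset X$ from the stabilization step is clopen and then conclude by the irreducibility of $X$—closedness is immediate from properness of $\pi$, but openness of $W$ is not a formal consequence of properness alone and in the end again must invoke $\rho_Y=1$ in some form.
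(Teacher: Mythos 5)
Your proof is correct in substance but takes a genuinely different route from the paper's. The paper runs the Koll\'ar--Miyaoka--Mori sweeping argument directly on the sets $V_y^k$: the dimension $\dim V_y^k$ (independent of $y$ by flatness) must strictly increase until it reaches $\dim Y$, for if it stabilized at a subvariety of codimension $q\geq 1$ then $A=\bigcup_{y\in T}V_y^k$, swept over a general $(q-1)$-dimensional $T\subset Y$, would be an effective divisor, hence ample because $\rho_Y=1$; every curve $\phi(\pi^{-1}(x))$ would then meet some $V_{y_x}^k$ and, by the stabilization $V_{y_x}^{k+1}=V_{y_x}^k$, be trapped inside $A$, contradicting the surjectivity of $\phi$. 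You instead combine Noetherian stabilization with the Campana/Koll\'ar--Miyaoka--Mori chain-connectedness quotient and rule out a positive-dimensional base via $\rho_Y=1$; this is legitimate, but you are invoking as a black box a theorem whose proof is essentially the argument the paper carries out by hand, so the paper's version is the more elementary and self-contained of the two. The numerical step in your sketch (pushing the pullback of an ample class of $B$ down to $Y$ and intersecting with a member of the family) does require the care you anticipate: one uses that a general member lies in a general fiber of the quotient, hence avoids the indeterminacy and exceptional loci, and that all members are algebraically equivalent, so the vanishing of the intersection number propagates to every member. Two loose ends to tie: for a special $y$ one still gets $V=Y$ from the symmetry $y'\in V_y^m\Leftrightarrow y\in V_{y'}^m$ (equivalently, $\bigcup_m V_y^m$ is the full equivalence class of $y$ and the classes partition $Y$); and the degenerate case where $\phi$ contracts the fibers of $\pi$ is excluded because, by the rigidity lemma, $\phi$ would then factor as $g\circ\pi$ with $g:X\to Y$ finite and smooth, hence \'etale, hence an isomorphism since the Fano manifold $Y$ is simply connected --- contradicting the hypothesis that the two morphisms are different. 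The paper's proof needs this exclusion just as much (its ample divisor must meet the \emph{curve} $\phi(\pi^{-1}(x))$), so flagging it explicitly is a point in your favour rather than a gap.
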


\begin{proof} The idea of this proof is in \cite{KMM1}. Since $V_y^k$ is an irreducible closed subset of $Y$, it is sufficient to show that $V_y^k=Y$ provided $\dim V_y^k=\dim V_y^{k+1}$. Remark that $\dim V_y^k$ is independent of the choice of $y \in Y$. It follows from flatness of $\pi$ and $\phi$.
Assume that $\dim V_y^k=\dim V_y^{k+1}$ for any $y \in Y$. Then $V_y^k=V_y^{k+1}$. 
Supposing that $V_y^k$ does not coincide with $Y$, we shall derive a contradiction.  
Let $q$ be the codimension of $V_y^k$ in $Y$ and $T \subset Y$ a $(q-1)$-dimensional projective subvariety. From our assumption, we have $q \geq 1$. Denote $\bigcup_{y \in T}V_y^k$ by $A$. Since $\rho_Y=1$, $A$ is an ample divisor on $Y$. Hence, for any point $x \in X$, $\phi(\pi^{-1}(x)) \cap A \neq \emptyset$, then there exists a point $y_x \in T$ such that $\phi(\pi^{-1}(x)) \cap V_{y_x}^k \neq \emptyset$. This implies that $\phi(\pi^{-1}(x))$ is contained in $V_{y_x}^{k+1}=V_{y_x}^k \subset A$.  However this contradicts the surjectivity of $\phi$. 
\end{proof}

\begin{proof}[Proof of Proposition~\ref{MTI}(ii) and (iii)]
Assume that there exists an irrational fiber of $\phi$. Then every fiber of $\phi$ is not rational. 
Let $f$ be a fiber of $\pi$ and $\nu$ the restriction of $\phi$ to $f \cong \PP^1$. Consider a smooth family of curves $\nu^{\ast}Z \rightarrow f\cong \PP^1$. Since a fiber of $\nu^{\ast}Z \rightarrow f\cong \PP^1$ is not rational, the family is isotrivial. Furthermore, the family is trivial by virtue of the simply-connectedness of $\PP^1$. It turns out that $\pi(\phi^{-1}(y_1))=\pi(\phi^{-1}(y_2))$ for any $y_1, y_2 \in \phi(\pi^{-1}(x))$ provided we fix a point $x \in X$. From Lemma~\ref{V}, it follows that any two point can be connected by a chain of rational curves $\phi(\pi^{-1}(x))$ of finite length. Hence we see that $\pi(\phi^{-1}(y_1))=\pi(\phi^{-1}(y_2))$ for any $y_1, y_2 \in Y$. However, this is a contradiction to the surjectivity of $\pi$ and $\dim \geq 2$. As a consequence, every fiber of $\phi$ is rational. 
Now $\rm (iii)$ follows in a similar way to $\rm (i)$. 
\end{proof}


According to a similar argument as in \cite[Sect. 4]{Hwang}, we prove the following:
\begin{pro}\label{A2} 
Let $X$ be an $n$-dimensional Fano manifold of $\rho=1$ and $\E$ a rank $2$ vector bundle on $X$. Assume that $Z:=\PP(\E) \rightarrow X$ admits another smooth morphism $\phi: Z \rightarrow Y$ whose fiber is isomorphic to $\PP^1$. If $n \geq 2$, then $A_2(X)_{\QQ}$ and $N^2(X)_{\QQ}$ are isomorphic to a $1$-dimensional vector space $\QQ$ over the field of rational numbers.
\end{pro}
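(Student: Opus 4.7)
I will follow the strategy of \cite{Hwang}, using the dominating family of rational curves on $X$ supplied by the fibers of $\phi$. For $y \in Y$, let $\ell_y := \pi(\phi^{-1}(y))$ be the corresponding ``line'' on $X$. Through a general point $x \in X$, the lines passing through $x$ are parametrized by the irreducible rational curve $Y_x := \phi(\pi^{-1}(x)) \subset Y$, and their union
\[ U_x := \pi(\phi^{-1}(Y_x)) \]
is an irreducible surface in $X$ containing $x$. Since $\{U_x\}_{x\in X}$ is an algebraic family of surfaces parametrized by $X$, the class $[U_x] \in A_2(X)_{\QQ}$ is independent of $x$, and this is the distinguished class we will show generates $A_2(X)_{\QQ}$.

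The heart of the argument is to prove that every 2-cycle on $X$ is rationally equivalent to a rational multiple of $[U_x]$. For an irreducible 2-cycle $W \subset X$, the correspondence provided by the double fibration produces a family of lines meeting $W$ together with the subvarieties swept out by such sub-families. Using the chain-connectedness established in Lemma~\ref{V} --- any two points of $X$ can be joined by a chain of lines of bounded length --- one converts these geometric relations into rational equivalences among the resulting cycles. A careful dimension count, exploiting that both $\pi$ and $\phi$ are smooth with $\PP^1$-fibers, then shows that modulo $\QQ \cdot [U_x]$ the class of $W$ must vanish, and hence $[W] \in \QQ \cdot [U_x]$.

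Once $A_2(X)_{\QQ} = \QQ \cdot [U_x]$ is established, the numerical statement follows easily: $N^2(X)_{\QQ}$ is by definition a quotient of $A_2(X)_{\QQ}$ and so has dimension at most one, while $[U_x] \cdot H^{n-2} > 0$ for $H$ an ample generator of $N^1(X)_{\QQ}$ shows that $[U_x]$ is numerically nontrivial, whence $N^2(X)_{\QQ} \cong \QQ$. The main obstacle is the middle step: making the correspondence argument precise requires delicate bookkeeping to ensure that the induced operator on $A_2(X)_{\QQ}$ acts as a scalar modulo $\QQ \cdot [U_x]$. In Hwang's treatment this is achieved by combining the smoothness of the relevant fibrations with a moving argument putting 2-cycles in general position with respect to the line family, and this is the ingredient one must adapt to the present setting.
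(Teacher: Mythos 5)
Your proposal correctly identifies the generating class (the surface $U_x=\pi(\phi^{-1}(\phi(\pi^{-1}(x))))$ is exactly the class $\pi(\phi^{-1}(\phi(C)))$, $C$ a fiber of $\pi$, that appears in the paper) and correctly points to Lemma~\ref{V} as the source of chain-connectedness. But the step you yourself call ``the heart of the argument'' --- that every $2$-cycle $W\subset X$ is rationally equivalent to a rational multiple of $[U_x]$ --- is not proved; it is only asserted to follow from ``a careful dimension count,'' ``delicate bookkeeping,'' and ``a moving argument putting $2$-cycles in general position.'' That assertion is essentially a restatement of the proposition, and no such moving lemma for Chow groups is available in this generality. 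The concrete ingredient you are missing is the one the paper uses: instead of acting on $2$-cycles of $X$ by the correspondence, one builds the tower of iterated fiber products $W_y^0=\phi^{-1}(y)$, $\widetilde{W^k_y}=W_y^k\times_X Z$, $W_y^{k+1}=\widetilde{W^k_y}\times_Y Z$, each stage a $\PP^1$-bundle over the previous one with a tautological section. By Lemma~\ref{V} some $W_y^{l}\rightarrow X$ is surjective, hence $A_2(W_y^{l})_{\QQ}\rightarrow A_2(X)_{\QQ}$ is surjective; and Fulton's decomposition $\gamma=\sigma_{\ast}\alpha+p^{\ast}\beta$ for $\PP^1$-bundles with a section (Lemma~\ref{p1}, \cite[Theorem~3.3]{ful}), applied inductively up the tower together with $A_0\cong\ZZ$ (rational connectedness), shows that $A_2(W_y^{l})$ is generated by cycles whose images in $X$ are either at most $1$-dimensional or of the form $U_x$. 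This computes the rank of the pushforward to be at most $1$ with no moving or general-position argument at all; that replacement of a moving argument by an explicit computation upstairs is precisely what makes the proof work, and it is absent from your sketch.

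A secondary point: your deduction of the $N^2$ statement from ``$N^2(X)_{\QQ}$ is by definition a quotient of $A_2(X)_{\QQ}$'' conflates dimension-$2$ and codimension-$2$ cycles when $n>4$. What the tower argument gives directly is $N_2(X)_{\QQ}\cong\QQ$; one then gets $\dim N^2(X)_{\QQ}\le 1$ because numerical equivalence embeds $N^2(X)_{\QQ}$ into the dual of $N_2(X)_{\QQ}$, and nontriviality from $H_X^2$. This is easily repaired, but as written the final step is also not correct.
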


\begin{proof} For a point $y \in Y$, we define inductively the varieties $W_y^{k}$ and $\widetilde{W^{k}_y}$ as follows:
\begin{enumerate}
\item $W_y^0:=\phi^{-1}(y)$, $\widetilde{W^0_y}:=W_y^0 \times_XZ$ and 
\item $W_y^{k}:=\widetilde{W^{k-1}_y} \times_YZ$, $\widetilde{W^{k}_y}:=W_y^{k} \times_XZ$.
\end{enumerate}
Remark that $\widetilde{W^{k-1}_y}$ has a natural morphism to $Y$ defined by the composition of a projection $\widetilde{W^{k-1}_y} \rightarrow Z$ and $\phi: Z \rightarrow Y$. On the other hand, $W_y^{k}$ admits a natural morphism to $X$ by the composition of a projection $W_y^{k} \rightarrow Z$ and $\pi: Z \rightarrow X$. Hence we can define $W_y^{k}$ and $\widetilde{W^{k}_y}$ as above. 

For a point $y \in Y$, the image of the composition of a projection $\widetilde{W^{k}_y} \rightarrow Z$ and $\phi: Z \rightarrow Y$ coincides with $V_y^{k+1}$ as in Lemma~\ref{V}. Therefore, there exists $l \in \NN$ such that $\widetilde{W^{l-1}_y} \rightarrow Y$ is surjective. Hence ${W^{l}_y} \rightarrow X$ is also surjective.
Then, so is $A_2({W^{l}_y})_{\QQ} \rightarrow A_2(X)_{\QQ}$. Thus, to prove $A_2(X)_{\QQ} \cong \QQ$, we only have to show that the rank of  $A_2({W^{l}_y})_{\QQ} \rightarrow A_2(X)_{\QQ}$ is at most $1$.  
Since ${W^{k}_y}$ and $\widetilde{W^{k}_y}$ are rationally connected, $A_0({W^{k}_y})$ and $A_0(\widetilde{W^{k}_y})$ are isomorphic to the ring of integers $\ZZ$. Then it follows from Lemma~\ref{p1} below that $A_1({W^{k}_y})$ and $A_1(\widetilde{W^{k}_y})$ are generated by curves whose images in $Z$ are either a fiber of $\pi$ or a fiber of $\phi$. Furthermore, $A_2({W^{k}_y})$ and $A_2(\widetilde{W^{k}_y})$ are generated by surfaces whose images in $Z$ are either a curve or surfaces of the form $\phi^{-1}(\phi(C))$ for some fiber $C$ of $\pi$ or $\pi^{-1}(\pi(C'))$ for some fiber $C'$ of $\phi$. Hence the rank of  $A_2({W^{l}_y})_{\QQ} \rightarrow A_2(X)_{\QQ}$ is at most $1$. Since $A_2(X)_{\QQ} \rightarrow N_2(X)_{\QQ}$ is surjective, $N_2(X)_{\QQ}$ is also isomorphic to $\QQ$. Thus we obtain $N^2(X)_{\QQ} \cong \QQ$.
\end{proof}

\begin{lem}\label{p1} Let $p: W' \rightarrow W$ be a $\PP^1$-bundle with a section $\sigma : W \rightarrow W'$. Then any $\gamma \in A_k(W')$ is of the form $\gamma = \sigma_{\ast}\alpha + p^{\ast}\beta$ for some $\alpha \in A_k(W)$ and $\beta \in A_{k-1}(W)$.
\end{lem}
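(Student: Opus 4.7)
The plan is to combine the excision (localization) sequence for Chow groups with the homotopy invariance property for affine bundles. First I would set $D := \sigma(W) \subset W'$; since $\sigma$ is a section of $p$, the subscheme $D$ is a Cartier divisor meeting each fibre of $p$ in a single point, and $\sigma$ restricts to an isomorphism $W \xrightarrow{\sim} D$. Hence the open complement $U := W' \setminus D$, together with the restriction of $p$, is an $\AA^1$-bundle over $W$.

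Let $i: D \hookrightarrow W'$ and $j: U \hookrightarrow W'$ denote the inclusions. The localization sequence for Chow groups yields the right-exact sequence
$$A_k(D) \xrightarrow{i_*} A_k(W') \xrightarrow{j^*} A_k(U) \to 0,$$
and under the identification $A_k(D) = A_k(W)$ induced by $\sigma$, the pushforward $i_*$ coincides with $\sigma_*$. Homotopy invariance for affine bundles gives that the flat pullback $(p|_U)^* : A_{k-1}(W) \to A_k(U)$ is an isomorphism, in particular surjective.

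Given $\gamma \in A_k(W')$, I would choose $\beta \in A_{k-1}(W)$ with $(p|_U)^*\beta = j^*\gamma$. Since flat pullback commutes with restriction to open subschemes, one has $j^*(p^*\beta) = (p|_U)^*\beta = j^*\gamma$, so $\gamma - p^*\beta$ lies in $\ker j^*$. By exactness it equals $i_*\delta = \sigma_*\alpha$ for some $\alpha \in A_k(W)$, yielding the required decomposition $\gamma = \sigma_*\alpha + p^*\beta$. The argument is essentially mechanical once both ingredients are in hand, so there is no substantial obstacle; the only point requiring a touch of care is confirming that the complement $U$ really is an $\AA^1$-bundle, which holds because the $\PP^1$-bundle $p$ is Zariski locally trivial and the section $\sigma$ cuts a single point from each fibre.
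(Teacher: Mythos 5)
Your argument is correct. The paper's own ``proof'' is just a citation of \cite[Theorem~3.3]{ful}, and your localization-plus-homotopy-invariance argument is precisely the standard proof of that cited statement: the section is a Cartier divisor whose complement is a Zariski-locally trivial $\AA^1$-bundle, the excision sequence $A_k(D)\to A_k(W')\to A_k(U)\to 0$ is right exact, and only surjectivity of $(p|_U)^{\ast}$ (Fulton's Proposition~1.9) is actually needed rather than the full isomorphism you invoke. In effect you have supplied a self-contained proof of the result the paper outsources; there is no gap.
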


\begin{proof} See \cite[Theorem~3.3]{ful}.
\end{proof}

\section{Computation of the discriminant $\Delta(\E)$}

Throughout this section, we work under the following assumptions:

\begin{Ass}\label{A} \rm Let $X$ and $Y$ be $n$-dimensional Fano manifolds of $\rho=1$ and $\E$ a normalized rank $2$ vector bundle over $X$, i.e., $c_1:=c_1(\E)=0$ or $-1$ (when the Picard group of $X$ is identified with $\ZZ$). Assume that $\pi: Z=\PP(\E) \rightarrow X$ admits another smooth morphism $\phi : Z \rightarrow Y$ whose fibers are isomorphic to $\PP^1$ and $n:=\dim X \geq 2$.

\end{Ass}

\begin{nota}\label{nota} \rm
\begin{itemize}
\item $H_X$ (resp. $H_Y$): the ample generator of Pic$(X)$ (resp. Pic$(Y)$). Note that $X$ and $Y$ are Fano manifolds of $\rho=1$ and hence ${\rm Pic}(X) \cong \ZZ$ and ${\rm Pic}(Y) \cong \ZZ$.
\item $i_X$ (resp. $i_Y$): the Fano index of $X$ (resp. $Y$).
\item $H:=\pi^{\ast}H_X, H':=\phi^{\ast}H_Y$.
\item $d_X:=H_X^n, d_Y:=H_Y^n$.
\item $f$ (resp.  $f'$): a fiber of $\pi$ (resp. $\phi$).
\item $\mu:=H.f', {\mu}':=H'.f$.
\item $\Delta(\E):=c_1^2(\E)-4c_2(\E)$
\item $\Sigma$: an effective cycle on $X$ of codimension $2$ such that $N^2(X)_{\QQ} =\QQ\Sigma$ (cf. Proposition~\ref{A2})
\item $c_2(\E)=:c_2\Sigma, H_X^2=:d\Sigma, \Delta(\E)=:(d\Delta)\Sigma$. 
\item $K_{\pi}:=K_Z-\pi^{\ast}K_X$.
\item $L$: a divisor associated with the tautological line bundle of $\PP(\E)$.
\item $\tau:=\tau(\E)$: the unique real number such that $-K_{\pi}+\tau H$ is nef but not ample. 
\item $\upsilon:=\upsilon(\E)$: the unique real number such that $-K_{\pi}+\upsilon H$ is pseudoeffective but not big.
\end{itemize}
\end{nota}

\begin{rem}\label{rem} The following holds:
\begin{enumerate}
\item $\tau \geq \upsilon$.
\item $K_{\pi}^2=\pi^{\ast}\Delta(\E)=\Delta H^2$.
\item $\E$ is not trivial.
\end{enumerate}
\end{rem}

\begin{proof} $\rm (i)$ If $\tau < \upsilon$, then $-K_{\pi}+\upsilon H$ is ample. This contradicts the definition of  $\upsilon$. \\
$\rm (ii)$ By using the Chern-Wu relation  
\begin{eqnarray}
L^2-\pi^{\ast}c_1(\E).L+\pi^{\ast}c_2(\E)=0,
\end{eqnarray}
a direct computation implies that $K_{\pi}^2=\pi^{\ast}\Delta(\E)=\Delta H^2$. \\
$\rm (iii)$ Assume that $\E$ is trivial. Then $Z=X \times \PP^1$, in particular, $Z$ is a Fano manifold. So $\phi$ is a $K_Z$-negative extremal contraction, hence $Y=\PP^1$. However it contradicts $\dim Y=n \geq 2$. 
\end{proof}

We review the definition of (semi)stability of vector bundles and some results in \cite{MOS}.
\begin{defi} \rm Under the same setting as in Assumptions~\ref{A}, let $A$ be an ample divisor on $X$. Then $\E$ is said to be {\it stable} (resp. {\it semistable}) if, for any line bundle $\L \subset \E$, 
\begin{eqnarray}
c_1(\L).A^{n-1} < \frac{1}{2}c_1(\E).A^{n-1}~ ({\rm resp}.~ c_1(L).A^{n-1} \le \frac{1}{2}c_1(\E).A^{n-1}). \nonumber
\end{eqnarray}
\end{defi}

\begin{them}\label{B} Let $(X,\E)$ be as in Assumptions~\ref{A}. If $\E$ is semistable, then, for an ample divisor $A \in {\rm Pic}(X)$, we have   
\begin{eqnarray}
\Delta(\E).A^{n-2} \leq 0. \nonumber
\end{eqnarray}
\end{them}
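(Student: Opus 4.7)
The inequality is the classical Bogomolov inequality for semistable rank $2$ sheaves, so the plan is to reduce the higher-dimensional statement to the surface case and then quote (or briefly reprove) Bogomolov's theorem on surfaces. Since $\rho_X = 1$, every ample $A$ is a positive multiple of $H_X$, so it is harmless to replace $A$ by a large multiple $mA$ throughout; the conclusion we seek is equivalent up to a positive factor.

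The plan is to cut $X$ by a general complete intersection. For $m \gg 0$ choose general members $A_1,\dots,A_{n-2} \in |mA|$. By Bertini their intersection $S := A_1 \cap \cdots \cap A_{n-2}$ is a smooth projective surface, and by the Mehta--Ramanathan restriction theorem, for $m$ sufficiently large the restriction $\E|_S$ is semistable with respect to $A|_S$. The key structural input is thus preserved under restriction, which is what makes the reduction to a surface available.

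On the surface $S$, Bogomolov's inequality for semistable rank $2$ sheaves gives
\begin{equation*}
\Delta(\E|_S) \;=\; c_1(\E|_S)^2 - 4c_2(\E|_S) \;\le\; 0.
\end{equation*}
By the projection formula, $\Delta(\E|_S) = \Delta(\E).[A_1]\cdots[A_{n-2}] = m^{n-2}\,\Delta(\E).A^{n-2}$. Since $m^{n-2} > 0$, this yields $\Delta(\E).A^{n-2} \le 0$, as desired.

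The only potential obstacle is verifying the hypotheses of the restriction theorem: one must know that semistability of $\E$ on $X$ with respect to $A$ passes to $\E|_S$ with respect to $A|_S$ for a general complete intersection of sufficiently high degree. This is exactly the content of Mehta--Ramanathan, so no separate argument is needed. Once the reduction is in place, the Bogomolov inequality on a surface is standard (provable, e.g., via Riemann--Roch applied to $S^k(\E|_S)$ together with Serre duality: if $\Delta(\E|_S) > 0$ then $h^0$ or $h^2$ of a symmetric power grows like $k^3$, producing a destabilizing sub-line-bundle of $\E|_S$ or of its dual).
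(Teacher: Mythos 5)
Your argument is correct and is exactly the route the paper takes: its proof of this theorem consists of the single sentence ``This follows from the Bogomolov inequality and the Mehta--Ramanathan theorem,'' which is precisely the reduction you carry out (restrict to a general high-degree complete intersection surface, where semistability is preserved, then apply Bogomolov). You have simply supplied the details the paper leaves implicit.
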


\begin{proof} This follows from the Bogomolov inequality and the Mehta-Ramanathan theorem.
\end{proof}

\begin{them}[{\cite[Theorem~2.3, Proposition~3.5, Remark~3.6]{MOS}}]\label{MOS} Let $(X,\E)$ be as in Assumptions~\ref{A}. Then the following holds:
\begin{enumerate}
\item $\tau \geq 0$, and the equality holds if and only if $\E \cong \mathscr{O}_X^{\oplus2}$.
\item If $\E$ is not semistable, then $\upsilon \leq 0$, and the equality holds if and only if $\E$ is strictly semistable.
\end{enumerate}
\end{them}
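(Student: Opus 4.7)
The plan is to translate the thresholds $\tau$ and $\upsilon$, defined intersection-theoretically on $Z$, into positivity properties of $\E$ on $X$. The key identity $K_\pi = -2L + c_1 H$ yields
$$-K_\pi + tH \;=\; 2L + (t-c_1)H \;=\; 2\left(L + \tfrac{t-c_1}{2}H\right),$$
so $-K_\pi + tH$ is nef (resp.\ pseudoeffective, effective, big) precisely when $L + \tfrac{t-c_1}{2}H$ is, and these in turn are governed by the positivity of the twisted $\RR$-bundle $\E \otimes H_X^{(t-c_1)/2}$ on $X$.

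For (i), this identification shows $\tau = 2s_0 + c_1$, where $s_0$ is the nef threshold $s_0 := \inf\{s \in \RR : \E \otimes H_X^{s} \text{ is nef}\}$. The inequality $\tau \geq 0$ reduces to $s_0 \geq -c_1/2$, which I would prove by observing that nefness of $\E \otimes H_X^s$ implies nefness of its determinant $H_X^{c_1 + 2s}$, forcing $c_1 + 2s \geq 0$. For the equality case $\tau = 0$: the twisted bundle $\E \otimes H_X^{-c_1/2}$ is then nef but not ample with zero-slope determinant; on a Fano variety of Picard number one, this rigidity (combined with Bogomolov's inequality, Theorem~\ref{B}) forces $\E \cong \O_X^{\oplus 2}$, in particular $c_1 = 0$.

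For (ii), assume $\E$ is not semistable with respect to $H_X$, and pick a destabilizing sub-line bundle $\L \hookrightarrow \E$ with $c_1(\L) = \ell H_X$, $2\ell > c_1$. The associated quotient $\E \twoheadrightarrow \E/\L$ gives a section $\sigma \colon X \to Z$ with $\sigma^* L = \E/\L$. To compute $[\sigma(X)] \in {\rm Pic}(Z) = \ZZ L \oplus \ZZ H$, write $[\sigma(X)] = L + aH$ (which follows from $\sigma(X) \cdot f = 1$) and use the normal-bundle identity
$$\sigma^*[\sigma(X)] \;=\; c_1(N_{\sigma(X)/Z}) \;=\; c_1\bigl(\L^\vee \otimes (\E/\L)\bigr) \;=\; (c_1 - 2\ell)H_X,$$
which pins down $a = -\ell$. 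Hence $L - \ell H$ is an effective divisor on $Z$, so $-K_\pi + (c_1 - 2\ell)H = 2(L - \ell H)$ is effective, yielding $\upsilon \leq c_1 - 2\ell < 0$. Equality $\upsilon = 0$ forces $2\ell = c_1$, i.e., the strictly semistable case (only occurring for $c_1 = 0$).

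The main obstacle is the rigidity half of (i): once $\tau = 0$ is established, one must rule out non-trivial nef normalized rank-2 bundles. This is where the Picard-number-one constraint becomes essential, forcing any non-split extension structure to degenerate and hence $\E \cong \O_X^{\oplus 2}$.
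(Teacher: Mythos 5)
First, note that the paper offers no proof of Theorem~\ref{MOS} to compare against: it is imported from \cite{MOS} as a black box, with only the citation in the theorem header. Judged on its own terms, the skeleton of your argument is sound in places: the identity $-K_\pi+tH=2\bigl(L+\tfrac{t-c_1}{2}H\bigr)$ and the resulting identification of $\tau$ and $\upsilon$ with nef and pseudoeffective thresholds of twists of $\E$ is correct; the determinant argument for $\tau\ge 0$ works; and the destabilizing-subsheaf argument for the inequality in (ii) is essentially the standard one. (For the latter, beware that $\E/\L$ need not be locally free even after saturating $\L$, so the section $\sigma$ may only be rational; it is cleaner to observe that $\L\subset\E$ gives $0\ne H^0(X,\E\otimes\L^{-1})=H^0(Z,\O_Z(L-\ell H))$, so $L-\ell H$ is effective and $\upsilon\le c_1-2\ell$ without ever invoking a section.)

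The genuine gap is precisely where you write ``the main obstacle'': the implication $\tau=0\Rightarrow\E\cong\O_X^{\oplus 2}$ is the substantive content of (i), and you only assert it. ``Nef but not ample with zero-slope determinant, combined with Bogomolov'' is not an argument: Theorem~\ref{B} constrains $\Delta(\E)\cdot A^{n-2}$, and nefness constrains Chern numbers, but no combination of numerical inequalities identifies the bundle. What is actually needed is: (a) for $c_1=0$, $\tau=0$ makes $L$ nef, so $\E$ and $\E^{\vee}\cong\E\otimes\det\E^{-1}$ are both nef, i.e.\ $\E$ is numerically flat; by Demailly--Peternell--Schneider it is a successive extension of hermitian flat bundles, these are trivial because a Fano manifold is simply connected, and $H^1(X,\O_X)=0$ then splits the extension; (b) the case $c_1=-1$ must be excluded separately, since there the relevant twist is by $\tfrac12H_X$ and the numerically flat machinery does not apply verbatim. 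Neither step appears in your sketch. A second, smaller problem: your discussion of the equality clause in (ii) is internally inconsistent. Under the stated hypothesis ($\E$ not semistable) one has $2\ell\ge c_1+1$, hence $\upsilon\le -1<0$, so equality never occurs; the clause is only meaningful if the hypothesis is ``not stable'' (which is also how Proposition~\ref{tau} uses the result), and even then the direction ``strictly semistable $\Rightarrow\upsilon=0$'' requires showing that $-K_\pi=2L-c_1H$ is not big, e.g.\ by bounding $h^0(S^m\E)$ for semistable $\E$ of slope zero --- a step your proposal does not address.
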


Thanks to Proposition~\ref{A2}, the same argument as in \cite[Proposition~4.12]{MOS} can be applied to our case. In particular, we obtain the following Proposition~\ref{tau} and \ref{key}. For the readers convenience, we recall their argument. 

\begin{pro}[{cf. \cite[Proposition~4.12]{MOS}}]\label{tau} Under the setting as in Assumptions~\ref{A}, the following holds:
\begin{enumerate}
\item $\tau=\upsilon=i_X-\frac{2}{\mu} \in \QQ_{>0}$, and  
\item $\E$ is stable.
\end{enumerate}
\end{pro}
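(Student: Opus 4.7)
The plan is to read $\tau$ off the two extremal rays of $\overline{NE}(Z)$ and then recognize the threshold divisor $-K_\pi+\tau H$ as numerically a positive rational multiple of the nef, non-big class $H'$, from which $\upsilon=\tau$ falls out. Since $\rho_Z=2$ and both $\pi$ and $\phi$ are smooth $\PP^1$-fibrations, $\overline{NE}(Z)=\RR_{\ge 0}[f]+\RR_{\ge 0}[f']$. A direct computation gives $(-K_\pi).f=2$ and $H.f=0$, while using $K_X=-i_X H_X$ together with the definition of $\mu$ yields $(-K_\pi).f'=2-i_X\mu$ and $H.f'=\mu$. Thus $(-K_\pi+tH).f=2>0$ for every $t$, and $(-K_\pi+tH).f'\ge 0$ precisely when $t\ge i_X-2/\mu$, forcing $\tau=i_X-2/\mu$.

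Next I identify the class $D:=-K_\pi+\tau H$. Because $\rho_Z=2$ and the classes $H,H'$ are linearly independent in $N^1(Z)_{\QQ}$ (otherwise a relation $H=cH'$ together with $H.f=0$ would give $H'.f=0$, collapsing the two extremal rays), the pair $\{H,H'\}$ is a basis. Writing $D\equiv aH+bH'$ and intersecting successively with $f'$ and $f$ gives $a\mu=0$ and $b\mu'=2$, so $a=0$ and $b=2/\mu'>0$; in particular $D\equiv(2/\mu')H'$ is effective, hence pseudoeffective. Since $H'$ is the pullback of an ample class from the $n$-dimensional $Y$ to the $(n+1)$-dimensional $Z$, one has $(H')^{n+1}=0$, so $H'$, and consequently $D$, fail to be big. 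Combined with Remark~\ref{rem}(i) this forces $\upsilon=\tau$. Rationality is immediate from $\mu,\mu',i_X\in\ZZ$, while positivity $\tau>0$ follows from Theorem~\ref{MOS}(i) together with the non-triviality of $\E$ in Remark~\ref{rem}(iii), completing (i).

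For (ii), I argue by contrapositive: if $\E$ were not stable, it would be either not semistable or strictly semistable, and Theorem~\ref{MOS}(ii) would in either case force $\upsilon\le 0$, contradicting the positivity established in (i). The main step of the whole argument is the identification $D\equiv(2/\mu')H'$, which uses $\rho_Z=2$ essentially and automatically bridges from the nef threshold $\tau$ to the pseudoeffective threshold $\upsilon$; once this bridge is in place, both (i) and (ii) reduce to quoting the theorems already recorded in the section.
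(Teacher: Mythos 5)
Your proof is correct and follows essentially the same route as the paper: both arguments rest on the fact that $\overline{NE}(Z)$ is spanned by $[f]$ and $[f']$, deduce $(-K_\pi+\tau H).f'=0$ to get $\tau=i_X-\tfrac{2}{\mu}$, observe that $-K_\pi+\tau H$ is pseudoeffective but not big (you make this concrete via the identity $-K_\pi+\tau H\equiv\tfrac{2}{\mu'}H'$, the paper via the contraction $\phi$ being defined by $|m(-K_\pi+\tau H)|$) to conclude $\upsilon=\tau$, and then quote Theorem~\ref{MOS} for positivity and stability. Your reading of Theorem~\ref{MOS}(ii) as covering both the unstable and strictly semistable cases is in fact more careful than the paper's one-line citation.
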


\begin{proof} 
$\rm(i)$ Since $\rho_Z=2$, the Kleiman-Mori cone of $Z$ is spanned by $[f]$ and $[f']$. Furthermore, we have $-K_Z.f=-K_Z.f'=2$. By Kleiman's criterion for ampleness, this implies that $-K_Z$ is ample, that is, $Z$ is a Fano manifold. So the nef cone of $Z$ is a rational polyhedral cone. This implies that $\tau$ is a rational number. It follows from Kawamata-Shokurov base point free theorem that $-K_{\pi}+C H$ is semiample. Then it turns out that $\phi$ is defined by the linear system $|m(-K_{\pi}+\tau H)|$ if $m$ is sufficiently large and divisible. This implies that $(-K_{\pi}+\tau H).f'=0$. Thus we see that $\tau=i_X-\frac{2}{\mu}$. Furthermore, since $\phi$ is a morphism of relative dimension $1$, $-K_{\pi}+\tau H$ is nef but not big. This means that $\tau \leq \upsilon$. By combining Remark~\ref{rem} $\rm (i)$, we get $\tau=\upsilon$. If $\tau=0$, then $\E$ is trivial by Theorem~\ref{MOS} $\rm (i)$. However it contradicts Remark~\ref{rem} $\rm (iii)$.  \\ 
$\rm (ii)$ Since we have $\tau>0$,  Theorem~\ref{MOS} $\rm (iii)$ concludes that $\E$ is stable. 

\end{proof}

\begin{pro}[{cf. \cite[Proposition~4.4]{MOS}}]\label{key} Under the setting as in Assumptions~\ref{A}, the following holds:
\begin{enumerate}
\item $\Delta<0$,
\item $\sqrt{-\Delta}=\tau {\rm tan}(\frac{\pi}{n+1})$, and 
\item $n=2,3$ or $5$.
\end{enumerate}
\end{pro}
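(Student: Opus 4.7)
The plan is to compute the top self-intersection $(-K_\pi+tH)^{n+1}$ on $Z$ as an explicit polynomial in $t$, then exploit its vanishing at $t=\tau=\upsilon$ together with its positivity for $t>\tau$.

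First I would use the numerical identity $K_\pi^2\equiv\pi^*\Delta(\E)\equiv\Delta H^2$ of Remark~\ref{rem}~(ii), which is valid because $N^2(X)_\QQ\cong\QQ$ by Proposition~\ref{A2}, together with $\pi_*(-K_\pi)=2$ and $H^{n+1}=\pi^*H_X^{n+1}=0$. Iterating gives $(-K_\pi)^{2j}\equiv\Delta^j H^{2j}$, so the even powers of $-K_\pi$ in the binomial expansion of $(-K_\pi+tH)^{n+1}$ contribute $\Delta^j H^{n+1}=0$, while the projection formula applied to the odd powers yields
\[
(-K_\pi+tH)^{n+1}=2d_X\sum_{j\ge 0}\binom{n+1}{2j+1}t^{n-2j}\Delta^j. \qquad (\star)
\]
For part~(i), by Proposition~\ref{tau} we have $\tau=\upsilon>0$, so $-K_\pi+\tau H$ is nef but not big and hence $(-K_\pi+\tau H)^{n+1}=0$. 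Substituting $t=\tau>0$ into $(\star)$: if $\Delta\ge 0$ every summand is nonnegative and the $j=0$ term $(n+1)\tau^n$ is strictly positive, a contradiction. Hence $\Delta<0$.

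Set $\delta:=\sqrt{-\Delta}>0$. The sum in $(\star)$ then coincides with $\mathrm{Im}\bigl((t+i\delta)^{n+1}\bigr)/\delta$, which in polar coordinates $(t,\delta)=r(\cos\theta,\sin\theta)$ with $\theta\in(0,\pi/2)$ equals $r^{n+1}\sin\bigl((n+1)\theta\bigr)/\delta$. For~(ii) I would separately verify that for $t>\tau$ the class $-K_\pi+tH=(-K_\pi+\tau H)+(t-\tau)H$ is a sum of two nef classes; expanding this as a binomial product and using $(-K_\pi+\tau H)\cdot H^n=2d_X$ gives
\[
(-K_\pi+tH)^{n+1}\ge 2(n+1)(t-\tau)^n d_X>0.
\]
Thus $\tau$ is the largest positive zero of $(\star)$; since $\theta(t)$ decreases monotonically to $0$ as $t\to+\infty$, this forces $(n+1)\theta(\tau)$ to be the smallest positive zero of $\sin$, namely $\pi$. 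Hence $\sqrt{-\Delta}=\tau\tan\bigl(\pi/(n+1)\bigr)$.

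Finally for~(iii), Proposition~\ref{tau}~(i) gives $\tau\in\QQ_{>0}$, and $\Delta\in\QQ$ by its definition in Notation~\ref{nota}, so $\tan^2\bigl(\pi/(n+1)\bigr)=-\Delta/\tau^2\in\QQ$. Via the identity $\cos(2\alpha)=(1-\tan^2\alpha)/(1+\tan^2\alpha)$ this forces $\cos\bigl(2\pi/(n+1)\bigr)\in\QQ$, and Niven's theorem (the only rational values of $\cos$ at rational multiples of $\pi$ are $0,\pm 1/2,\pm 1$) combined with $n+1\ge 3$ then restricts $n+1\in\{3,4,6\}$, i.e.\ $n\in\{2,3,5\}$. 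The main technical step is the careful derivation of the clean formula $(\star)$ from the Chern-Wu relation and the projection formula; after that, the argument reduces to a trigonometric rewriting and a classical number-theoretic fact.
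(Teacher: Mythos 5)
Your proof is correct and follows the same skeleton as the paper's: expand $(-K_\pi+\tau H)^{n+1}=0$ via $K_\pi^2\equiv\Delta H^2$ and $H^{n+1}=0$ into the odd-binomial polynomial, recognize it as $\mathrm{Im}\bigl((\tau+\sqrt{\Delta})^{n+1}\bigr)$ up to a positive factor, and finish with the rationality of $\tan^2(\pi/(n+1))$. There are two local differences worth noting. For (i), the paper first invokes stability of $\E$ and the Bogomolov/Mehta--Ramanathan inequality (Theorem~\ref{B}) to get $\Delta\le 0$ and only then rules out $\Delta=0$ from the polynomial; your positivity-of-all-summands argument gets $\Delta<0$ in one step without any stability input, which is a genuine simplification. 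For (ii), the paper passes directly from $(\tau+\sqrt{\Delta})^{n+1}=(\tau-\sqrt{\Delta})^{n+1}$ to $\arg(\tau+\sqrt{\Delta})=\pi/(n+1)$, silently discarding the possibilities $k\pi/(n+1)$ with $k\ge 2$; your observation that $(-K_\pi+tH)^{n+1}\ge 2(n+1)(t-\tau)^n d_X>0$ for $t>\tau$ (as a product of nef classes), so that $(n+1)\theta(t)$ can never cross a zero of $\sin$ for $t>\tau$, pins down $k=1$ and thus fills a step the paper leaves implicit. The reduction in (iii) to $\cos(2\pi/(n+1))\in\QQ$ and Niven's theorem is equivalent to the paper's citation of the algebraic degree of $\tan(\pi/(n+1))$.
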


\begin{proof} $\rm (i)$ By Proposition~\ref{tau}, $\E$ is stable. Then $\Delta \leq 0$ by Theorem~\ref{B}. Again by Proposition~\ref{tau},   $-K_{\pi}+\tau H$ is nef but not big. So we have $(-K_{\pi}+\tau H)^{n+1}=0$. Since $K_{\pi}^2=\Delta H^2$ (see Remark~\ref{rem}), $H^{n+1}=0$ and $-K_{\pi}.H^n>0$,  $(-K_{\pi}+\tau H)^{n+1}=0$ is equivalent to 
\begin{eqnarray}\label{1}
\sum_{\stackrel{i=0}{i\equiv1 (2)}}^{n+1}\binom{n+1}{i}\tau^{n+1-i}\Delta^{\frac{i-1}{2}}=0. 
\end{eqnarray}
If $\Delta=0$, then $\tau^n=0$ by (\ref{1}). It means that $\tau=0$. However this contradicts Proposition~\ref{tau} $\rm (i)$. As a consequence, we have $\Delta<0$.\\
$\rm (ii)$ From the above equality~(\ref{1}), we obtain  
\begin{eqnarray}\label{2}
(\tau+\sqrt{\Delta})^{n+1}-(\tau-\sqrt{\Delta})^{n+1}=0.
\end{eqnarray}
We denote the argument of the complex number $(\tau + \sqrt{\Delta})^{n+1}$ by ${\rm arg}\left(\tau + \sqrt{\Delta}\right)^{n+1} \in [0, 2\pi)$. Then (\ref{2}) is equivalent to 
\begin{eqnarray}\label{3}
{\rm arg}\left(\tau + \sqrt{\Delta}\right)=0~{\rm or~} \frac{\pi}{n+1}.
\end{eqnarray}
Since we have $\Delta<0$ by $\rm (i)$, (\ref{3}) implies 
\begin{eqnarray}
\sqrt{-\Delta}=\tau {\rm tan}\left(\frac{\pi}{n+1}\right).\nonumber
\end{eqnarray}
\\
$\rm (iii)$ From $\rm (ii)$, we obtain 
\begin{eqnarray}
 {\rm tan}^2\left(\frac{\pi}{n+1}\right)=\frac{{-\Delta}}{\tau^2} \in \QQ. \nonumber
\end{eqnarray}
The algebraic degree of ${\rm tan}\left(\frac{\pi}{n+1}\right)$ over $\QQ$ is known (see \cite[pp. 33-41]{Ni} and \cite[Proposition~2]{Ca}). Then we see that $n=2, 3$ or $5$.

\end{proof}



On the other hand, we give another description of $\Delta(\E)$ via a computation of the total Chern class $c(\pi^{\ast}\E)$. First, we prepare the following lemma. 

\begin{lem}\label{ab} Under the setting as in Assumptions~\ref{A}, 
let $\sigma$ denote the restriction of $\pi$ to $f' \cong \PP^1$. If $\sigma^{\ast}\E \cong \mathscr{O}_{\PP^1}(a)\oplus\O_{\PP^1}(b)$ ($a \geq b$), then we have    
\begin{eqnarray}
(a,b)=\left(-1+\frac{(c_1+i_X)\mu}{2}, 1+{\frac{(c_1-i_X)\mu}{2}}\right).  \nonumber
\end{eqnarray}
\end{lem}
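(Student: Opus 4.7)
The plan is to determine $a$ and $b$ by computing two numerical invariants — $a+b = \deg(\sigma^{\ast}\det\E)$ and $\ell := L.f' = \deg(L|_{f'})$ — and then arguing that $\ell$ equals the smaller of the two, namely $b$. Since $c_1(\E) = c_1H_X$ and $\sigma^{\ast}H_X$ has degree $\mu$ on $f' \cong \PP^1$, the first invariant is immediate: $a+b = c_1\mu$.

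For $\ell$, I would use the relative canonical formula $K_{\pi} = -2L + c_1 H$ coming from the rank-$2$ projective bundle structure of $\pi$. Smoothness of $\phi$ with one-dimensional fibers implies that the normal bundle $N_{f'/Z}$ is trivial, so by adjunction $-K_Z.f' = 2$. Combining this with $K_{\pi} = K_Z - \pi^{\ast}K_X$ and $-K_X.f' = i_X\mu$ gives $-K_{\pi}.f' = 2 - i_X\mu$, and hence $\ell = 1 + (c_1 - i_X)\mu/2$.

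The key step is arguing $\ell = b$. The inclusion $f' \hookrightarrow Z$ factors as a section of $\PP(\sigma^{\ast}\E) \to f'$, which under Hartshorne's convention corresponds to a surjection $\sigma^{\ast}\E = \O_{\PP^1}(a) \oplus \O_{\PP^1}(b) \twoheadrightarrow L|_{f'} = \O_{\PP^1}(\ell)$. A direct analysis on $\PP^1$ forces $\ell = b$ or $\ell \geq a$: for $b < \ell < a$ the first component of the map lies in $H^0(\O_{\PP^1}(\ell-a)) = 0$, while the second is a section of $\O_{\PP^1}(\ell-b)$ with $\ell-b > 0$ zeros, obstructing surjectivity. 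To eliminate $\ell \geq a$ I would invoke Proposition~\ref{tau}: since $\tau = i_X - 2/\mu > 0$ we have $i_X\mu > 2$, and hence $\ell = (a+b)/2 - (i_X\mu-2)/2 < (a+b)/2 \leq a$, leaving only $\ell = b$.

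Solving $b = 1 + (c_1-i_X)\mu/2$ together with $a = c_1\mu - b$ yields the claimed formulas, and the identity $a - b = i_X\mu - 2 > 0$ confirms consistency with the chosen ordering $a \geq b$. The main obstacle is really the identification of $\ell$ with $b$ rather than $a$; it is the positivity of $\tau$ established in Proposition~\ref{tau} that closes this off.
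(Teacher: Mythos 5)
Your proof is correct, and its skeleton---compute $a+b=c_1\mu$ and $\ell:=L.f'$, then identify $\ell$ with $b$---matches the paper's; the two arguments diverge only in how the crucial identification $\ell=b$ is justified. The paper pulls $Z$ back to $f'$ to obtain the Hirzebruch surface $\sigma^{\ast}Z\cong\PP(\O_{\PP^1}(a)\oplus\O_{\PP^1}(b))$ and asserts that the tautological section $\sigma^{\ast}f'$ is the exceptional (negative) curve, hence equals $\PP(\O_{\PP^1}(b))$, giving $L.f'=b$; the geometric input behind that terse assertion is that $\sigma^{\ast}f'$ is contracted by the composite $\sigma^{\ast}Z\to Z\to Y$, so its self-intersection $2\ell-(a+b)$ is negative. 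You instead read the section as a surjection $\O_{\PP^1}(a)\oplus\O_{\PP^1}(b)\twoheadrightarrow\O_{\PP^1}(\ell)$, note that surjectivity on $\PP^1$ forces $\ell=b$ or $\ell\geq a$, and exclude the latter via $\ell<(a+b)/2$, which is precisely the positivity $\tau=i_X-\frac{2}{\mu}>0$ of Proposition~\ref{tau}. This is legitimate (Proposition~\ref{tau} precedes the lemma and does not use it), and your inequality is in fact equivalent to the negativity of the self-intersection that the paper invokes; your version makes explicit a step the paper leaves implicit, at the mild cost of importing $\tau>0$. Your derivation of $\ell=1+\frac{(c_1-i_X)\mu}{2}$ from $K_{\pi}=-2L+c_1H$, $K_Z.f'=-2$ and $\pi^{\ast}K_X.f'=-i_X\mu$ is exactly the computation the paper performs tacitly when it writes $L.f'=1+\frac{(c_1-i_X)\mu}{2}$.
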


\begin{proof} Let us consider a $\PP^1$-bundle $\sigma^{\ast}Z \cong \PP(\mathscr{O}_{\PP^1}(a)\oplus\O_{\PP^1}(b))$ over $f' \cong \PP^1$. Then $\sigma^{\ast}f'$ is an exceptional curve on $\sigma^{\ast}Z$. It implies that $\sigma^{\ast}f' \cong \PP(\mathscr{O}_{\PP^1}(b))$. Hence $b=L.f'=1+{\frac{(c_1-i_X)\mu}{2}}$. On the other hand, we have $a+b=c_1\mu$. Thus $a=-1+\frac{(c_1+i_X)\mu}{2}$. 
\end{proof}

\begin{lem}\label{c} Under the setting as in Assumptions~\ref{A},  
the total Chern class $c(\pi^{\ast}\E)$ is given by
\begin{eqnarray}\label{ch}
c(\pi^{\ast}\E)=1+\frac{1}{\mu}(a+b)H+\left(\frac{ab}{\mu^2}H^2+\frac{a-b}{\mu\mu'}HH'-\frac{1}{{\mu'}^2}{H'}^2\right). \nonumber
\end{eqnarray}
\end{lem}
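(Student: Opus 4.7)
The plan is to use the Grothendieck tautological short exact sequence on $Z=\PP(\E)$ to factor $c(\pi^{\ast}\E)$ as a product of two line-bundle Chern classes, and then to compute each factor in the basis $\{H,H'\}$ of $N^1(Z)_{\QQ}$ using only the intersection numbers recorded in Notation~\ref{nota} and the splitting type from Lemma~\ref{ab}. Since $\E$ has rank $2$, the tautological sequence
$$0 \longrightarrow \O_Z(-1)\otimes \pi^{\ast}\det\E \longrightarrow \pi^{\ast}\E \longrightarrow \O_Z(1) \longrightarrow 0$$
gives
$$c(\pi^{\ast}\E)=\bigl(1+\pi^{\ast}c_1(\E)-L\bigr)\bigl(1+L\bigr),$$
so the task reduces to expressing the two linear factors in terms of $H$ and $H'$.

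I would first write $L=pH+qH'$ and solve for $p,q$ by intersecting with $f$ and $f'$. Since $H,H'$ are pulled back along $\pi,\phi$, one has $H.f=H'.f'=0$, while Notation~\ref{nota} records $H'.f=\mu'$ and $H.f'=\mu$. Combined with $L.f=1$ (because $\O_Z(1)$ restricts to $\O_{\PP^1}(1)$ on each $\pi$-fiber) and $L.f'=b$ (established inside the proof of Lemma~\ref{ab}), this forces $q=1/\mu'$ and $p=b/\mu$, hence
$$L=\frac{b}{\mu}H+\frac{1}{\mu'}H'.$$
Next, since ${\rm Pic}(X)\cong \ZZ H_X$ gives $c_1(\E)=c_1 H_X$ and therefore $\pi^{\ast}c_1(\E)=c_1 H$, the identity $a+b=c_1\mu$ from Lemma~\ref{ab} yields $\pi^{\ast}c_1(\E)=\frac{a+b}{\mu}H$, and subtracting
$$\pi^{\ast}c_1(\E)-L=\frac{a}{\mu}H-\frac{1}{\mu'}H'.$$
A routine two-term expansion of
$$c(\pi^{\ast}\E)=\left(1+\frac{a}{\mu}H-\frac{1}{\mu'}H'\right)\left(1+\frac{b}{\mu}H+\frac{1}{\mu'}H'\right)$$
then produces the formula stated in the lemma.

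I do not anticipate a serious obstacle here: the only conceptual step is to recognize that the tautological sequence factors $c(\pi^{\ast}\E)$ as the product of two line-bundle Chern classes, after which everything reduces to elementary intersection numbers with $f$ and $f'$. One should note that, since $H^2,HH',{H'}^2$ are linearly dependent in the two-dimensional space $N^2(Z)_{\QQ}$, the displayed expression for $c_2(\pi^{\ast}\E)$ is not the unique representative in this generating set; it is simply the representative that arises naturally from this factorization, and it is the form that will be convenient for the subsequent computation of $\Delta(\E)$.
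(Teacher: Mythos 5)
Your proposal is correct and follows essentially the same route as the paper: the paper also writes $c(\pi^{\ast}\E)=c(P)\cdot c(L)$ using the exact sequence $0\to P\to\pi^{\ast}\E\to L\to 0$ (your $P=\O_Z(-1)\otimes\pi^{\ast}\det\E$), and determines each factor in the basis $\{H,H'\}$ from the intersection numbers with $f$ and $f'$ coming from Lemma~\ref{ab}. No substantive difference.
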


\begin{proof} Let $P$ be the kernel of $\pi^{\ast}\E \rightarrow L$. Then we have an exact sequence 
\begin{eqnarray}
0 \rightarrow P \rightarrow \pi^{\ast}\E \rightarrow L \rightarrow 0. 
\end{eqnarray} 
In general, any saturated subsheaf of a locally-free sheaf is again locally-free. So $P$ is a line bundle. 
Since $L|_f \cong \mathscr{O}_{\PP^1}(1)$ and $L|_{f'} \cong \mathscr{O}_{\PP^1}(b)$, we see that $P|_f = \ker(\pi^{\ast}\E|_f \rightarrow \mathscr{O}_{\PP^1}(1)) \cong  \mathscr{O}_{\PP^1}(-1)$ and $P|_{f'} = \ker(\pi^{\ast}\E|_{f'} \rightarrow \mathscr{O}_{\PP^1}(b)) \cong  \mathscr{O}_{\PP^1}(a)$. Remark that $A^1(Z)_\QQ = \langle H, H'\rangle_\QQ$. Hence we obtain 
\begin{eqnarray}
c(L) &=& 1+\frac{b}{\mu}H+\frac{1}{\mu'}H', \nonumber\\
c(P) &=& 1+\frac{a}{\mu}H-\frac{1}{\mu'}H', \nonumber
\end{eqnarray} 
and 
\begin{eqnarray}
c(\pi^{\ast}\E) &=& c(L)\cdot c(P) =\left(1+\frac{b}{\mu}H+\frac{1}{\mu'}H'\right)\cdot \left(1+\frac{a}{\mu}H-\frac{1}{\mu'}H'\right) \nonumber\\ 
&=& 1+\frac{1}{\mu}(a+b)H+\left(\frac{ab}{\mu^2}H^2+\frac{a-b}{\mu\mu'}HH'-\frac{1}{{\mu'}^2}{H'}^2\right).\nonumber
\end{eqnarray} 
\end{proof}

By using Lemmas~\ref{c} and \ref{eq} below, the equality (\ref{ch}) will be rewritten in more simple form in Proposition~\ref{C}.

\begin{lem}\label{e} Under the setting as in Assumptions~\ref{A}, let $\nu$ be the restriction of $\phi$ to $f \cong \PP^1$ and $\zeta$ a projection $\nu^{\ast}Z \rightarrow Z$. If $N_{\nu^{\ast}f/\nu^{\ast}Z} \cong \mathscr{O}_{\PP^1}(-e)$, then $e>0$ and we have
\begin{eqnarray}
(\zeta^{\ast}H)^2=\frac{\mu e}{\mu'}(\zeta^{\ast}H\zeta^{\ast}H') . \nonumber
\end{eqnarray}
\end{lem}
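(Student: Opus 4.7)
The plan is to compute everything on the smooth projective surface $\nu^*Z$ in a convenient basis of $N^1(\nu^*Z)_{\QQ}$, read off the identity by a direct expansion, and then rule out the degenerate case $e=0$ via a Lemma~\ref{V}-style rigidity argument.

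First I would observe that $p:\nu^*Z\to f\cong\PP^1$ is a $\PP^1$-bundle over $\PP^1$, being the base change of $\phi$ along $\nu$. Hence $N^1(\nu^*Z)_{\QQ}$ has basis $\{[\nu^*f],[F]\}$, where $F$ is a fiber of $p$; the given hypothesis gives $(\nu^*f)^2=-e$, while $F^2=0$ and $\nu^*f\cdot F=1$. Under $\zeta$, the section $\nu^*f$ (coming from the natural inclusion $f\hookrightarrow Z$) maps isomorphically onto $f$, and each fiber $F$ maps isomorphically onto a fiber of $\phi$; thus the projection formula yields
\[
\zeta^*H\cdot\nu^*f=H\cdot f=0,\quad \zeta^*H\cdot F=H\cdot f'=\mu,\quad \zeta^*H'\cdot\nu^*f=H'\cdot f=\mu',\quad \zeta^*H'\cdot F=0.
\]
Solving in the basis gives $\zeta^*H\equiv\mu\,\nu^*f+\mu e\,F$ and $\zeta^*H'\equiv\mu'F$, and a short expansion produces $(\zeta^*H)^2=\mu^2 e$ and $\zeta^*H\cdot\zeta^*H'=\mu\mu'$, which together yield the stated formula.

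The weak inequality $e\ge 0$ comes for free: $\zeta^*H$ is the pullback of the nef divisor $H=\pi^*H_X$, hence nef on $\nu^*Z$, so $(\zeta^*H)^2=\mu^2 e\ge 0$. To rule out $e=0$, I would argue by contradiction. If $e=0$, then $(\zeta^*H)^2=0$, and since $\zeta^*H=(\pi\circ\zeta)^*H_X$ with $H_X$ ample, the morphism $\pi\circ\zeta:\nu^*Z\to X$ fails to be generically finite onto its image. The image $\pi(\phi^{-1}(\nu(f)))$ cannot be zero-dimensional (the two-dimensional irreducible surface $\phi^{-1}(\nu(f))$ does not lie in a single $\pi$-fiber $\cong\PP^1$), so it must be a curve $C\subset X$; irreducibility and dimension then force $\phi^{-1}(\nu(f))=\pi^{-1}(C)$.

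The main obstacle is converting this identity of surfaces into a contradiction, and Lemma~\ref{V} is exactly the right tool. Pick $y\in\nu(f)$. Then $\phi^{-1}(y)\subset\pi^{-1}(C)$, and since $[f],[f']$ span distinct extremal rays of the Kleiman--Mori cone of $Z$, the map $\pi$ is non-constant on the fiber $\phi^{-1}(y)$; hence $\pi(\phi^{-1}(y))$ is an irreducible curve in $C$, and therefore equals $C$. So $\pi^{-1}(\pi(\phi^{-1}(y)))=\pi^{-1}(C)=\phi^{-1}(\nu(f))$, giving $V_y^1=\phi(\phi^{-1}(\nu(f)))=\nu(f)$. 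An easy induction yields $V_y^k=\nu(f)$ for every $k\ge 1$, contradicting the conclusion $V_y^l=Y$ of Lemma~\ref{V} (since $\dim\nu(f)=1<n=\dim Y$). This gives $e>0$ and completes the proof.
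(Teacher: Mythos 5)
Your proof is correct, and it differs from the paper's in two respects. For the intersection identity the paper runs essentially the dual computation: it writes the tautological quotient and sub-line bundles $M$ and $Q$ of $\psi^{\ast}\left(\mathscr{O}_{\PP^1}\oplus\mathscr{O}_{\PP^1}(-e)\right)$ as combinations of $\zeta^{\ast}H$ and $\zeta^{\ast}H'$ (the coefficients being determined, just as in your argument, by restricting to $\nu^{\ast}f$ and to a fiber) and then extracts the relation from $c_2\left(\psi^{\ast}\left(\mathscr{O}_{\PP^1}\oplus\mathscr{O}_{\PP^1}(-e)\right)\right)=0$; you instead expand $\zeta^{\ast}H$ and $\zeta^{\ast}H'$ in the basis $\{[\nu^{\ast}f],[F]\}$ and use the intersection matrix of the Hirzebruch surface directly. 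These are equivalent pieces of linear algebra on $N^1(\nu^{\ast}Z)_{\QQ}$, and yours is arguably the more elementary. The more substantive difference is the inequality $e>0$: the paper disposes of it in one line by asserting that $\nu^{\ast}f$ is the exceptional (negative) section of $\nu^{\ast}Z$, without explaining why the surface cannot be $\PP^1\times\PP^1$, whereas you supply a complete argument --- $e\geq 0$ from nefness of $\zeta^{\ast}H$, and $e\neq 0$ because otherwise $(\zeta^{\ast}H)^2=0$ forces $\phi^{-1}(\nu(f))=\pi^{-1}(C)$ for some irreducible curve $C\subset X$, which freezes the chain at $V_y^k=\nu(f)$ for all $k\geq 1$ and contradicts Lemma~\ref{V}. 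That extra step genuinely fills a gap the paper leaves implicit, and it does so in the same spirit as the rigidity arguments of Section 2, at the modest cost of invoking Lemma~\ref{V} a second time.
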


\begin{proof} We consider a $\PP^1$-bundle $\psi: \nu^{\ast}Z \rightarrow f \cong \PP^1$. Then $\nu^{\ast}f$ is an exceptional curve on $\nu^{\ast}Z$. From $N_{\nu^{\ast}f/\nu^{\ast}Z} \cong \mathscr{O}_{\PP^1}(-e)$, we obtain $e>0$. Furthermore, we see that $\nu^{\ast}Z \cong \PP(\mathscr{O}_{\PP^1} \oplus \mathscr{O}_{\PP^1}(-e))$ and $\nu^{\ast}f \cong \PP(\mathscr{O}_{\PP^1}(-e))$. Let $M$ be the tautological line bundle of $\nu^{\ast}Z \cong \PP(\mathscr{O}_{\PP^1} \oplus \mathscr{O}_{\PP^1}(-e))$ and $Q$ the kernel of $\psi^{\ast}\left(\mathscr{O}_{\PP^1} \oplus \mathscr{O}_{\PP^1}(-e)\right) \rightarrow M$. Then we have an exact sequence 
\begin{eqnarray}
0 \rightarrow Q \rightarrow \psi^{\ast}\left(\mathscr{O}_{\PP^1} \oplus \mathscr{O}_{\PP^1}(-e)\right) \rightarrow M \rightarrow 0. \nonumber
\end{eqnarray} 

Then we see that $M|_{\nu^{\ast}f} \cong \mathscr{O}_{\PP^1}(-e)$ and $M|_{\nu^{\ast}f'} \cong \mathscr{O}_{\PP^1}(1)$. These imply that 
\begin{eqnarray}
Q|_{\nu^{\ast}f} &=& \ker\left(\psi^{\ast}\left(\mathscr{O}_{\PP^1} \oplus \mathscr{O}_{\PP^1}(-e)\right)|_{\nu^{\ast}f}  \rightarrow \mathscr{O}_{\PP^1}(-e)\right) \cong \mathscr{O}_{\PP^1}, ~{\rm and} \nonumber \\ 
Q|_{\nu^{\ast}f'} &=& \ker\left(\psi^{\ast}\left(\mathscr{O}_{\PP^1} \oplus \mathscr{O}_{\PP^1}(-e)\right)|_{\nu^{\ast}f'}  \rightarrow \mathscr{O}_{\PP^1}(1)\right) \cong \mathscr{O}_{\PP^1}(-1).  \nonumber
\end{eqnarray}
Remark that $A^1(\nu^{\ast}Z)_\QQ = \langle \nu^{\ast}H, \nu^{\ast}H'\rangle_\QQ$. Hence we obtain 
\begin{eqnarray}
c(M)&=&1+\frac{1}{\mu}\zeta^{\ast}H-\frac{e}{\mu'}\zeta^{\ast}H',\nonumber \\
c(Q)&=&1-\frac{1}{\mu}\zeta^{\ast}H \nonumber.
\end{eqnarray} 
and 
\begin{eqnarray} 
c(\psi^{\ast}\left(\mathscr{O}_{\PP^1} \oplus \mathscr{O}_{\PP^1}(-e)\right))&=&c(M)\cdot c(Q) =\left(1+\frac{1}{\mu}\zeta^{\ast}H-\frac{e}{\mu'}\zeta^{\ast}H' \right)\cdot\left(1-\frac{1}{\mu}\zeta^{\ast}H\right) \nonumber \\ 
&=& 1-\frac{e}{\mu'}\zeta^{\ast}H'+\left(-\frac{1}{\mu^2}(\zeta^{\ast}H)^2+\frac{e}{\mu\mu'}\zeta^{\ast}H\zeta^{\ast}H' \right) \nonumber.
\end{eqnarray} 
Furthermore, we obtain
\begin{eqnarray}
-\frac{1}{\mu^2}(\zeta^{\ast}H)^2+\frac{e}{\mu\mu'}\zeta^{\ast}H\zeta^{\ast}H' =c_2\left(\psi^{\ast}\left(\mathscr{O}_{\PP^1} \oplus \mathscr{O}_{\PP^1}(-e)\right)\right)=\psi^{\ast}\left(c_2\left(\mathscr{O}_{\PP^1} \oplus \mathscr{O}_{\PP^1}(-e)\right)\right)=0 \nonumber.
\end{eqnarray} 
As a consequence, we get
\begin{eqnarray}
(\zeta^{\ast}H)^2=\frac{\mu e}{\mu'}(\zeta^{\ast}H\zeta^{\ast}H') \nonumber
\end{eqnarray}
as desired.
\end{proof}

\begin{lem}\label{eq}  Under the setting as in Assumptions~\ref{A}, we have
\begin{eqnarray}
\frac{a-b}{e\mu^2}H^2 = \frac{a-b}{\mu\mu'}HH'-\frac{1}{\mu'^2}{H'}^2 \in N^2(Z)_{\QQ}. \nonumber
\end{eqnarray}
\end{lem}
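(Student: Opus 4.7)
The lemma asserts a single linear relation in $N^2(Z)_\QQ$ among the three classes $H^2$, $HH'$, and $H'^2$. My plan is first to establish that this space has dimension exactly $2$, so that such a relation is unique up to scalar, and then to pin down the scalar by comparing a Chern class identity on $Z$ with a direct intersection computation on the surface $\nu^*Z$.

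For the dimension count, combine Proposition~\ref{A2} with the Chow ring structure of the $\PP^1$-bundle $\pi\colon Z=\PP(\E)\to X$: since $\E$ has rank $2$, the projective bundle formula gives $N^2(Z)_\QQ = \pi^*N^2(X)_\QQ \oplus L\cdot N^1(X)_\QQ$, which is two-dimensional under the hypothesis $N^2(X)_\QQ\cong \QQ$. Thus $H^2$, $HH'$, $H'^2$ are linearly dependent, with a single scalar determining the relation. To extract its general form, apply Lemma~\ref{c} to write
\[
c_2(\pi^*\E) \;=\; \tfrac{ab}{\mu^2}H^2 + \tfrac{a-b}{\mu\mu'}HH' - \tfrac{1}{\mu'^2}H'^2 .
\]
On the other hand, $c_2(\E)=c_2\Sigma$ and $H_X^2 = d\Sigma$ in the one-dimensional $N^2(X)_\QQ$, so pulling back yields $c_2(\pi^*\E) = \tfrac{c_2}{d}H^2$ in $N^2(Z)_\QQ$. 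Comparing gives
\[
\tfrac{a-b}{\mu\mu'}HH' - \tfrac{1}{\mu'^2}H'^2 \;=\; \lambda H^2,\qquad \lambda := \tfrac{c_2}{d} - \tfrac{ab}{\mu^2},
\]
and the task reduces to verifying $\lambda = \tfrac{a-b}{e\mu^2}$.

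To compute $\lambda$, pull the above relation back via $\zeta$ to the surface $\nu^*Z \cong \PP(\O_{\PP^1}\oplus\O_{\PP^1}(-e))$. The key intersection inputs are obtained by noting that $\zeta$ restricts to the inclusion on the exceptional section $\nu^*f$ (so $\zeta^*H\cdot \nu^*f = H\cdot f = 0$ and $\zeta^*H'\cdot \nu^*f = H'\cdot f = \mu'$) and that $\zeta$ sends fibers of $\psi$ isomorphically to fibers $f'$ of $\phi$ (so $\zeta^*H\cdot \nu^*f' = \mu$ and $\zeta^*H'\cdot \nu^*f' = 0$). These four numbers, together with the standard Hirzebruch intersections $(\nu^*f)^2=-e$, $\nu^*f\cdot \nu^*f'=1$, $(\nu^*f')^2=0$, pin down $\zeta^*H = \mu\,\nu^*f + \mu e\,\nu^*f'$ and $\zeta^*H' = \mu'\,\nu^*f'$, giving $(\zeta^*H)^2=\mu^2 e$, $(\zeta^*H)(\zeta^*H')=\mu\mu'$, $(\zeta^*H')^2=0$. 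Substituting these into the pulled-back relation yields $\lambda\mu^2 e = a-b$, hence $\lambda = \tfrac{a-b}{e\mu^2}$, as desired.

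The main conceptual obstacle is the initial dimension reduction: Proposition~\ref{A2} is indispensable, since otherwise extra degrees of freedom would prevent identifying the coefficient with a single scalar $\lambda$ determinable by either Chern classes or by intersection theory on a single test surface. Once this is in place, everything else is bookkeeping on $\PP^1$-bundles, and the only quantitative input from Lemma~\ref{e} that is actually needed is the positivity $e>0$, which makes $\lambda = (a-b)/(e\mu^2)$ well-defined.
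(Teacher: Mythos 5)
Your proof is correct and follows essentially the same route as the paper: both use $N^2(X)_\QQ\cong\QQ$ (Proposition~\ref{A2}) to reduce the identity to determining a single scalar coefficient of $H^2$ in the expression for $c_2(\pi^{\ast}\E)$ from Lemma~\ref{c}, and both determine that scalar by pulling back to the surface $\nu^{\ast}Z$. The only difference is cosmetic: where the paper invokes Lemma~\ref{e} (itself proved by a Chern-class computation of $\psi^{\ast}(\mathscr{O}_{\PP^1}\oplus\mathscr{O}_{\PP^1}(-e))$), you recompute $(\zeta^{\ast}H)^2=\mu^2 e$ and $\zeta^{\ast}H\cdot\zeta^{\ast}H'=\mu\mu'$ directly in the $\{\nu^{\ast}f,\nu^{\ast}f'\}$ basis of the Hirzebruch surface, which yields the same relation.
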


\begin{proof} In Lemma~\ref{c}, we have seen that $\pi^{\ast}\left(c_2\left(\E \right)\right)=c_2(\pi^{\ast}\E)=\frac{ab}{\mu^2}H^2+\frac{a-b}{\mu\mu'}HH'-\frac{1}{{\mu'}^2}{H'}^2$. Since we have $N^2(X)_{\QQ} \cong \QQ$, there exists $g \in \QQ$ such that
\begin{eqnarray}\label{g} 
gH^2+\frac{a-b}{\mu\mu'}HH'-\frac{1}{{\mu'}^2}{H'}^2=0 \in N^2(Z)_{\QQ}.
\end{eqnarray}
Pulling back to $\nu^{\ast}Z$ by $\zeta$, we obtain
\begin{eqnarray} 
\zeta^{\ast}(gH^2+\frac{a-b}{\mu\mu'}HH')=\zeta^{\ast}\left(gH^2+\frac{a-b}{\mu\mu'}HH'-\frac{1}{{\mu'}^2}{H'}^2\right)=0 \in N^2(\nu^{\ast}Z)_{\QQ}. \nonumber
\end{eqnarray}
By Lemma~\ref{e}, $(\zeta^{\ast}H)^2=\frac{\mu e}{\mu'}(\zeta^{\ast}H\zeta^{\ast}H')$. It turns out that 
\begin{eqnarray} 
\left(\frac{g\mu e}{\mu'}+\frac{a-b}{\mu\mu'}\right)\zeta^{\ast}H\zeta^{\ast}H'=0 \in N^2(\nu^{\ast}Z)_{\QQ}. \nonumber
\end{eqnarray}
Hence we have $g=\frac{b-a}{\mu^2e}$. Substituting this in the equation~(\ref{g}), we obtain 
\begin{eqnarray}
\frac{a-b}{e\mu^2}H^2 = \frac{a-b}{\mu\mu'}HH'-\frac{1}{\mu'^2}{H'}^2 \in N^2(Z)_{\QQ}. \nonumber
\end{eqnarray} 
\end{proof}

By combining Lemmas~\ref{c} and \ref{eq}, we get the following:

\begin{pro}\label{C} Under the setting as in Assumptions~\ref{A}, 
the total Chern class $c(\pi^{\ast}\E)$ is given by
\begin{eqnarray}
c(\pi^{\ast}\E)=1+\frac{1}{\mu}(a+b)H+\left(\frac{ab}{\mu^2}+\frac{a-b}{e\mu^2}\right)H^2 \in 1\oplus N^1(Z) \oplus N^2(Z)_{\QQ}. \nonumber
\end{eqnarray}
\end{pro}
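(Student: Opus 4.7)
The plan is essentially to combine the two preceding lemmas; there is very little left to do. Lemma~\ref{c} computes the total Chern class of $\pi^{\ast}\E$ in the form
\begin{eqnarray}
c(\pi^{\ast}\E)=1+\frac{1}{\mu}(a+b)H+\left(\frac{ab}{\mu^2}H^2+\frac{a-b}{\mu\mu'}HH'-\frac{1}{{\mu'}^2}{H'}^2\right), \nonumber
\end{eqnarray}
working in the full Chow ring. The task is then to re-express the degree-$2$ component inside $N^2(Z)_{\QQ}$ so that only $H^2$ appears.

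First I would invoke Lemma~\ref{eq}, which asserts precisely the relation
\begin{eqnarray}
\frac{a-b}{\mu\mu'}HH'-\frac{1}{\mu'^2}{H'}^2 = \frac{a-b}{e\mu^2}H^2 \in N^2(Z)_{\QQ}. \nonumber
\end{eqnarray}
Substituting this into the $c_2$-term from Lemma~\ref{c} replaces the mixed and $H'^2$ contributions by a single multiple of $H^2$, and the first two Chern-class components are untouched because they already lie in $N^1(Z)$ and $\QQ$ respectively. Combining, the $H^2$-coefficient becomes $\tfrac{ab}{\mu^2}+\tfrac{a-b}{e\mu^2}$, yielding the claimed formula
\begin{eqnarray}
c(\pi^{\ast}\E)=1+\frac{1}{\mu}(a+b)H+\left(\frac{ab}{\mu^2}+\frac{a-b}{e\mu^2}\right)H^2 \in 1\oplus N^1(Z) \oplus N^2(Z)_{\QQ}. \nonumber
\end{eqnarray}

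There is no genuine obstacle here: the real work was done in Lemma~\ref{eq}, whose proof used Proposition~\ref{A2} (which gives $N^2(X)_{\QQ}\cong\QQ$, and hence the single relation among $H^2,\ HH',\ H'^2$) together with the auxiliary intersection identity $(\zeta^{\ast}H)^2=\tfrac{\mu e}{\mu'}\zeta^{\ast}H\zeta^{\ast}H'$ from Lemma~\ref{e}. Thus the proposition is simply the compilation of Lemmas~\ref{c} and \ref{eq}, and I would present it as a one-line substitution.
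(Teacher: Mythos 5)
Your proposal is correct and is exactly the paper's argument: the paper introduces Proposition~\ref{C} with the sentence ``By combining Lemmas~\ref{c} and \ref{eq}, we get the following,'' i.e.\ the same one-line substitution of the relation from Lemma~\ref{eq} into the degree-$2$ term of Lemma~\ref{c}. Nothing is missing.
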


\begin{pro}\label{delta}  Under the setting as in Assumptions~\ref{A}, $e$ is defined by $N_{\nu^{\ast}f/\nu^{\ast}Z} \cong \mathscr{O}_{\PP^1}(-e)$ as in Lemma~\ref{e}. Then we have
\begin{eqnarray}
\Delta=\tau^2-\frac{4\tau}{e \mu}. \nonumber
\end{eqnarray}
\end{pro}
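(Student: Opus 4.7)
The plan is to extract the coefficient $\Delta$ from the identity $\pi^{\ast}c_2(\E) = c_2(\pi^{\ast}\E)$ using the formula for $c_2(\pi^{\ast}\E)$ given in Proposition~\ref{C}, and then massage the resulting expression into the form $\tau^2 - 4\tau/(e\mu)$ by exploiting the explicit values of $a$ and $b$ from Lemma~\ref{ab} together with the relation $\tau = i_X - 2/\mu$ from Proposition~\ref{tau}.

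First, I would unwind the definition. By Notation~\ref{nota}, $H_X^2 = d\Sigma$ and $\Delta(\E) = (d\Delta)\Sigma$, so writing $c_2(\E) = c_2\Sigma$ we have $\Delta = c_1^2 - 4c_2/d$. Pulling back by $\pi$ gives $\pi^{\ast}c_2(\E) = (c_2/d)H^2$ in $N^2(Z)_\QQ$ (using $H^2 = d\pi^{\ast}\Sigma$). Comparing with Proposition~\ref{C}, which asserts
\begin{eqnarray}
c_2(\pi^{\ast}\E) = \left(\frac{ab}{\mu^2} + \frac{a-b}{e\mu^2}\right)H^2, \nonumber
\end{eqnarray}
yields $c_2/d = ab/\mu^2 + (a-b)/(e\mu^2)$, and therefore
\begin{eqnarray}
\Delta = c_1^2 - \frac{4ab}{\mu^2} - \frac{4(a-b)}{e\mu^2}. \nonumber
\end{eqnarray}

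Next, I would simplify using Lemma~\ref{ab}. Since $a+b = c_1\mu$, the identity $c_1^2\mu^2 - 4ab = (a-b)^2$ reduces the first two terms to $(a-b)^2/\mu^2$. From the explicit values in Lemma~\ref{ab} one computes $a - b = i_X\mu - 2$, which by Proposition~\ref{tau}(i) equals exactly $\tau\mu$. Substituting $a - b = \tau\mu$ gives $(a-b)^2/\mu^2 = \tau^2$ and $4(a-b)/(e\mu^2) = 4\tau/(e\mu)$, yielding the claimed formula $\Delta = \tau^2 - 4\tau/(e\mu)$.

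No step is really the main obstacle once Proposition~\ref{C} and Lemma~\ref{ab} are available; the cleanest part of the argument is the observation $a - b = \tau\mu$, which is where the Fano index $i_X$ and the auxiliary invariant $\mu$ conspire to produce precisely the nef-threshold $\tau$. The only point requiring mild care is keeping track of normalizations — namely that $\Delta$ as defined in Notation~\ref{nota} is a coefficient (with respect to $\Sigma$) rather than a cycle, so the division by $d$ must be performed before identifying terms.
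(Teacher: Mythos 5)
Your proposal is correct and follows essentially the same route as the paper: both extract $\Delta$ from the expression for $c(\pi^{\ast}\E)$ in Proposition~\ref{C}, use the identity $(a+b)^2-4ab=(a-b)^2$, and conclude via $a-b=i_X\mu-2=\tau\mu$. The only cosmetic difference is that the paper computes directly with $\Delta H^2=c_1(\pi^{\ast}\E)^2-4c_2(\pi^{\ast}\E)$ rather than first dividing through by $d$, which amounts to the same bookkeeping.
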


\begin{proof} From the definition of $\Delta$ and Proposition~\ref{C}, 
\begin{eqnarray}
\Delta H^2 &=& c_1(\pi^{\ast}\E)^2-4c_2(\pi^{\ast}\E) \nonumber \\ 
&=& \left(\frac{1}{\mu}(a+b)H\right)^2-4\left(\frac{ab}{\mu^2}+\frac{a-b}{e\mu^2}\right)H^2
=\frac{a-b}{\mu^2 e}\left(e\left(a-b\right)-4\right)H^2. \nonumber
\end{eqnarray}
By Lemma~\ref{ab}, $a-b=i_X\mu-2=\tau\mu$. Thus, we obtain
$\Delta =\tau^2-\frac{4\tau}{e \mu}$ as desired.
\end{proof}



\section{Proof of Theorem~\ref{MT}}

In this section, we prove Theorem~\ref{MT}. It is sufficient to work under the same setting as in Assumptions~\ref{A}.
Then it follows from Proposition~\ref{key} that $n=2,3$ or $5$.

\begin{them}\label{n2} With the same setting as in Assumptions~\ref{A}, if $n=2$, then $(X,\E)$ is isomorphic to $(\PP^2, T_{\PP^2})$.    
\end{them}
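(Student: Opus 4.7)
The plan is to determine $X$ and the Chern classes of $\E$ via the numerical constraints of Section~3, and then invoke a classical uniqueness result for stable rank $2$ bundles on $\PP^2$.

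First, since $n=2$ and $X$ is a Fano manifold with $\rho_X=1$, the classification of smooth del~Pezzo surfaces of Picard number one forces $X\cong\PP^2$, so $i_X=3$.

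Next I would combine Propositions~\ref{key}(ii) and~\ref{delta} specialized to $n=2$. Using $\tan(\pi/3)=\sqrt{3}$, Proposition~\ref{key}(ii) gives $\Delta=-3\tau^2$, while Proposition~\ref{delta} gives $\Delta=\tau^2-4\tau/(e\mu)$. Equating and cancelling $\tau>0$ (Proposition~\ref{tau}(i)) yields $e\mu\tau=1$. Substituting $\tau=i_X-2/\mu=3-2/\mu$ from Proposition~\ref{tau}(i) turns this into $e(3\mu-2)=1$. Since $e$ and $\mu$ are positive integers (by Lemma~\ref{e} and the definition of $\mu$), we are forced to have $e=\mu=1$, hence $\tau=1$ and $\Delta=-3$.

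On $\PP^2$ the invariant $\Delta=c_1^2-4c_2$ is an integer. Combined with the normalization $c_1\in\{0,-1\}$, the equation $c_1^2-4c_2=-3$ rules out $c_1=0$ (which would give $c_2=3/4$) and forces $(c_1,c_2)=(-1,1)$. By Proposition~\ref{tau}(ii) the bundle $\E$ is stable, and it is classical that there is a unique (up to isomorphism) stable rank $2$ vector bundle on $\PP^2$ with $c_1=-1$ and $c_2=1$, namely $\Omega^1_{\PP^2}(1)\cong T_{\PP^2}(-2)$, the normalization of $T_{\PP^2}$. This identifies $(X,\E)\cong(\PP^2,T_{\PP^2})$ as claimed.

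The main obstacle, should one want a self-contained argument, is the last uniqueness statement. I would handle it via the Hartshorne--Serre construction: Riemann--Roch gives $\chi(\E(1))=3$, while stability yields $h^0(\E(-3))=0$ and hence $h^2(\E(1))=0$ by Serre duality, so $h^0(\E(1))\geq 3$. A nonzero section of $\E(1)$ cannot vanish along a divisor (any effective divisor on $\PP^2$ has degree $\geq 1$, which would destabilize $\E$), so it vanishes on a length-$c_2(\E(1))=1$ subscheme, i.e., a single reduced point $p$. The resulting extension
\begin{equation*}
0\to\O\to\E(1)\to\I_p(1)\to 0,
\end{equation*}
combined with the Koszul resolution of $\I_p$ on $\PP^2$ and the Euler sequence, identifies $\E(1)$ with the universal quotient bundle $T_{\PP^2}(-1)$ on $\PP^2$, hence $\E\cong T_{\PP^2}(-2)$.
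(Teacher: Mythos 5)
Your proposal is correct and follows essentially the same route as the paper: both pin down $(i_X,\mu,e)=(3,1,1)$, $\tau=1$, $\Delta=-3$ by equating the two expressions for $\Delta$ from Propositions~\ref{key}(ii) and~\ref{delta}, deduce $(c_1,c_2)=(-1,1)$ using stability, and conclude by uniqueness of the stable rank $2$ bundle on $\PP^2$ with these Chern classes (the paper simply cites \cite{Hu}). The only cosmetic differences are that the paper obtains $c_1=-1$ from the parity constraint of Lemma~\ref{ab} rather than from integrality of $c_2$, and that it does not spell out the Hartshorne--Serre argument you supply for the final uniqueness step.
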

 
\begin{proof}  Since a Fano surface of $\rho=1$ is isomorphic to $\PP^2$, we have $X \cong \PP^1$ and $i_X=3$. By virtue of Proposition~\ref{key}~(ii) and Proposition~\ref{delta}, we have $\Delta=-3\tau^2$ and $\Delta=\tau^2-\frac{4\tau}{e \mu}$. Recall that $\tau>0$ by Proposition~\ref{tau}~(i). Thus $(3\mu-2)e=\tau\mu e=1$. 
 Hence $(i_X, \mu, e)=(3,1,1)$, $\tau=1$ and $\Delta=-3$. By Lemma~\ref{ab}, $(c_1+i_X)\mu$ is divisible by $2$. This implies that $c_1=-1$. Here we take a point on $X \cong \PP^2$ as a base $\Sigma$ of $N^2(X)_{\QQ}$. Then $d=1$. Since $\Delta=-3$ and $c_1=-1$, we see that $c_2=1$. Hence $\E$ is a rank $2$ stable vector bundle over $\PP^2$ with $(c_1, c_2)=(-1, 1)$. Then $\E$ is isomorphic to $T_{\PP^2}$ by \cite{Hu}.
\end{proof}


\begin{lem}\label{n3} With the same setting as in Assumptions~\ref{A}, if $n=3$, then $(i_X, \mu, e)=(4,1,1), (3,1,2), (2,2,1), (1,3,2)$ or $ (1,4,1)$. 
\end{lem}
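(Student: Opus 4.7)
The plan is to pin down the triple $(i_X,\mu,e)$ by comparing the two expressions for the discriminant obtained in Section~3. For $n=3$, Proposition~\ref{key}(ii) specializes to $\sqrt{-\Delta}=\tau\tan(\pi/4)=\tau$, so $\Delta=-\tau^{2}$. Equating this with the formula $\Delta=\tau^{2}-\frac{4\tau}{e\mu}$ from Proposition~\ref{delta} and cancelling the factor $\tau>0$ guaranteed by Proposition~\ref{tau}(i), I obtain the compact relation $\tau e\mu=2$.

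Next I substitute $\tau\mu=i_X\mu-2$ (also from Proposition~\ref{tau}(i)) to convert this into the Diophantine equation
\[e(i_X\mu-2)=2\]
in positive integers. Each factor is a genuine positive integer: $e\in\ZZ_{>0}$ by Lemma~\ref{e}, $\mu\in\ZZ_{>0}$ as an intersection number of the ample class $H_X$ with a curve (it cannot vanish, since $\tau=i_X-2/\mu$ is finite and positive), and $i_X\mu-2>0$ follows from $\tau>0$. Moreover, since $X$ is a smooth Fano $3$-fold of $\rho=1$ by Proposition~\ref{MTI}(iii), the Kobayashi--Ochiai theorem gives the ceiling $i_X\leq n+1=4$.

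The equation admits only two factorizations: either $(e,i_X\mu-2)=(1,2)$, giving $i_X\mu=4$, or $(e,i_X\mu-2)=(2,1)$, giving $i_X\mu=3$. Running through positive-integer factorizations of $4$ and $3$ subject to $i_X\leq 4$, I am left with exactly
\[(i_X,\mu,e)=(4,1,1),\ (2,2,1),\ (1,4,1),\ (3,1,2),\ (1,3,2),\]
which is the claim. The argument is essentially mechanical once the two discriminant computations of Section~3 are placed side by side; the only real care needed is in invoking the Kobayashi--Ochiai bound to cut off $i_X\geq 5$ and checking that $\mu$ and $e$ are indeed positive integers, so that the factorizations of $2$ can be read off directly.
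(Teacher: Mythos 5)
Your proof is correct and follows essentially the same route as the paper: combine $\Delta=-\tau^2\tan^2(\pi/4)=-\tau^2$ with $\Delta=\tau^2-\frac{4\tau}{e\mu}$, cancel $\tau>0$ to get $(i_X\mu-2)e=\tau\mu e=2$, and enumerate the integer factorizations. The appeal to Kobayashi--Ochiai is harmless but superfluous, since $i_X\mu\in\{3,4\}$ with $\mu\geq 1$ already forces $i_X\leq 4$.
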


\begin{proof} By virtue of Proposition~\ref{key}~(ii) and Proposition~\ref{delta}, we have $\Delta=-\tau^2$ and $\Delta=\tau^2-\frac{4\tau}{e \mu}$. Recall that $\tau>0$ by Proposition~\ref{tau}~(i). Thus $(i_X\mu-2)e=\tau\mu e=2$. This implies that $(i_X, \mu, e)=(4,1,1), (3,1,2), (2,2,1), (1,3,2)$ or $ (1,4,1)$.
\end{proof}

\begin{lem}\label{221}Under the same setting as in Lemma~\ref{n3}, if $(i_X, \mu, e)=(2,2,1)$, then $c_1=0$. 
\end{lem}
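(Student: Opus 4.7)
The plan is to assume $c_1 = -1$ and derive an integrality contradiction in $\mathrm{Pic}(Z) = \pi^{\ast}\mathrm{Pic}(X) \oplus \ZZ L = \ZZ H \oplus \ZZ L$.

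First, I locate the relevant classes on $Z$. Lemma~\ref{ab} applied with $(i_X, \mu, c_1) = (2, 2, -1)$ gives $(a, b) = (0, -2)$, so in particular $L \cdot f' = b = -2$. Proposition~\ref{tau} yields $\tau = i_X - 2/\mu = 1$, and combined with $-K_{\pi} = 2L - c_1 H = 2L + H$ this produces $-K_{\pi} + \tau H = 2(L + H)$. Since this nef class is proportional to the primitive class $H' \in \mathrm{Pic}(Z)$ and defines the morphism $\phi$, we conclude $H' = L + H$, giving $\mu' = H' \cdot f = 1$.

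Next, I invoke the analogous analysis on the $Y$-side. The pair $(Y, \E')$ satisfies Assumptions~\ref{A} by Proposition~\ref{MTI}, so Lemma~\ref{n3} applied with $\mu_Y = \mu' = 1$ restricts $(i_Y, e_Y)$ to $(4, 1)$ or $(3, 2)$. The formula of Lemma~\ref{ab} for $(Y, \E')$ gives $b_Y = 1 + (c_1' - i_Y)/2$. Integrality of $b_Y$ forces $c_1' \equiv i_Y \pmod{2}$, hence $c_1' = 0$ when $i_Y = 4$ and $c_1' = -1$ when $i_Y = 3$; in either case, $b_Y = -1$.

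Finally, writing the tautological line bundle $L'$ of $\PP_Y(\E')$ as $L' = \alpha L + \beta H$ in $\mathrm{Pic}(Z)$, the identities $L' \cdot f = b_Y = -1$ and $L' \cdot f' = 1$ give $\alpha = -1$ and $2\beta = 1 - b_Y b = 1 - (-1)(-2) = -1$, so $\beta = -\tfrac{1}{2} \notin \ZZ$. This contradicts $L' \in \mathrm{Pic}(Z)$, and therefore $c_1 = 0$.

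The main obstacle is that the use of $\E'$ and its tautological $L'$ precedes their formal construction later in the paper (Proposition~\ref{n35}). One can sidestep this by working directly with the relative anticanonical $-K_{\phi} = -K_Z - i_Y H' = (2 - i_Y)L + (3 - i_Y)H$: no integer $c_1'$ makes the class $-K_{\phi} + c_1' H'$ divisible by $2$ in $\mathrm{Pic}(Z)$, since the two coefficients $2 - i_Y + c_1'$ and $3 - i_Y + c_1'$ in its $\{L, H\}$-decomposition differ by $1$ and hence cannot both be even --- producing the same obstruction without invoking $\E'$.
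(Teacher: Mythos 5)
Your computations are all correct: with $c_1=-1$ one gets $(a,b)=(0,-2)$, $\tau=1$, $-K_\pi+\tau H=2(L+H)$, hence $\mu'=1$ and $H'=L+H$, and indeed every class $\alpha L+\beta H\in\mathrm{Pic}(Z)$ then has even degree $-2\alpha+2\beta$ on $f'$. But the argument is circular, and the fallback you offer does not repair it. Both versions of your contradiction amount to: ``there is no line bundle on $Z$ of odd degree on $f'$, hence no tautological class $L'$, hence no rank $2$ bundle $\E'$ with $Z\cong\PP_Y(\E')$.'' At this point in the paper the existence of $\E'$ is \emph{not} known --- it is exactly the content of Proposition~\ref{n35}, whose proof in the case $(n,i_X,\mu,e)=(3,2,2,1)$ cites Lemma~\ref{221} to get $c_1=0$ and hence $L.f'=-1$. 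Your fallback via $-K_\phi$ changes nothing: writing $-K_\phi+c_1'H'=2L'$ already presupposes that $\phi$ is the projectivization of a rank $2$ bundle, so the failure of $2$-divisibility only contradicts a statement that has not yet been established. What you have genuinely proved is that if $c_1=-1$ then the smooth $\PP^1$-fibration $\phi$ has no relative degree~$1$ line bundle, i.e.\ a nontrivial Brauer obstruction on $Y$. To convert that into a contradiction you would need an independent input such as $\mathrm{Br}(Y)=0$ for a Fano $3$-fold of $\rho=1$ (e.g.\ via torsion-freeness of $H^3(Y,\ZZ)$); that is a real theorem you neither state nor cite, and it is not the route the paper takes.

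For comparison, the paper's proof is entirely geometric and stays on the $X$-side: assuming $c_1=-1$, it uses Riemann--Roch and $\Delta=-1$ to force $(d_X,c_2)=(4,2)$, identifies $X$ as the complete intersection of two quadrics in $\PP^5$ by Iskovskikh's classification, and then shows that $\pi\circ\zeta:\nu^{\ast}Z\to X$ would produce a degree~$1$ image of $\PP^2$ embedded by $\mathscr{O}_{\PP^2}(2)$ inside $X$, contradicting adjunction. If you want to salvage your lattice-theoretic approach, you must either supply the Brauer-group vanishing or find a contradiction that does not pass through the existence of $\E'$.
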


\begin{proof} In this case, $X$ is a del Pezzo $3$-fold of $\rho=1$. So $H^4(X, \ZZ)$ is generated by a line $l$ on $X$. Here a line means a rational curve with $H_X.l=1$. We take $l$ as a base $\Sigma$ of $N^2(X)_{\QQ}$. Then $c_2$ is an integer.

Now assume the contrary of our claim, that is, $c_1=-1$. Then Riemann-Roch theorem tells us that $c_2$ is even. On the other hand, we see that $\Delta=-1$. Thus, we obtain $d_X=d=2c_2$. From the classification of del Pezzo $3$-fold of $\rho=1$ \cite{Isk}, it follows that $d_X \leq 5$. It turns out that $(d_X, c_2)=(4,2)$. Again, according to \cite{Isk}, $X$ is a complete intersection of two quadric $4$-folds in $\PP^5$. 

Consider a morphism $\pi \circ \zeta : \nu^{\ast}Z \rightarrow X$. Since $\nu^{\ast}Z \cong \PP(\mathscr{O}_{\PP^1}\oplus \mathscr{O}_{\PP^1}(-1))$, $\pi \circ \zeta$ factors through $g: \PP^2 \rightarrow  X$. This can be obtained by taking the Stein factorization of $\pi \circ \zeta=g \circ h$, where $h: \nu^{\ast}Z \rightarrow \PP^2$ is a blow-up at a point $o \in \PP^2$. Remark that $h$ sends every fiber of $\nu^{\ast}Z \rightarrow f \cong \PP^1$ to a line through $o \in \PP^2$. Hence, for a line $l_o$ through $o \in \PP^2$,  we have $g^{\ast}H_X.l_o=\mu=2$. This implies that $g^{\ast}\mathscr{O}_X(H_X) \cong \mathscr{O}_{\PP^2}(2)$. Let $S$ denote the image of $g$. Then the degree of $S \subset \PP^5$ satisfies that $\deg (g) \deg (S)=4$.
On the other hand, $S$ is a member of the linear system $|\mathscr{O}_X(sH_X)|$ for some $s>0$. So we have $\deg(S)=4s$. Hence we see that $(s, \deg(g))=(1,1)$. Then, it is easy to see that $S$ is smooth, that is, $S \cong \PP^2$. However this contradicts the adjunction formula. 

\end{proof}

By the same way as in Lemma~\ref{n3}, we can prove the following:

\begin{lem}\label{n5} With the same setting as in Assumptions~\ref{A}, if $n=5$, then $(i_X, \mu, e)=(5,1,1), (3,1,3), (1,5,1),$ or $ (1,3,3)$. 
\end{lem}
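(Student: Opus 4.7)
The plan is to mimic exactly the proof of Lemma~\ref{n3}, since Proposition~\ref{key}~(ii) and Proposition~\ref{delta} give two independent expressions for $\Delta$ that together force a single integer equation on $(i_X,\mu,e)$, and the only new ingredient is to evaluate $\tan(\pi/(n+1))$ at $n=5$ instead of $n=3$.

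First I would apply Proposition~\ref{key}~(ii) with $n=5$: since $\tan(\pi/6)=1/\sqrt{3}$, we get
\[
\sqrt{-\Delta}=\tau\tan\!\left(\frac{\pi}{6}\right)=\frac{\tau}{\sqrt{3}},
\]
so $\Delta=-\tau^2/3$. Combining this with the identity $\Delta=\tau^2-\frac{4\tau}{e\mu}$ from Proposition~\ref{delta}, and using $\tau>0$ (Proposition~\ref{tau}~(i)) to cancel one factor of $\tau$, the two descriptions merge into
\[
\tau\, e\, \mu = 3.
\]

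Next, recall from Proposition~\ref{tau}~(i) that $\tau = i_X-\frac{2}{\mu} = \frac{i_X\mu-2}{\mu}$, so the previous equation becomes the Diophantine equation
\[
(i_X\mu-2)\,e = 3
\]
in positive integers. Since $3$ is prime, we must have either $e=1$ with $i_X\mu=5$, or $e=3$ with $i_X\mu=3$. Factoring $5=1\cdot 5=5\cdot 1$ and $3=1\cdot 3=3\cdot 1$ in positive integers yields exactly the four triples
\[
(i_X,\mu,e)=(5,1,1),\ (3,1,3),\ (1,5,1),\ (1,3,3),
\]
as claimed.

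Because the argument is purely arithmetic, there is really no obstacle: the only thing to be careful about is the evaluation of $\tan(\pi/6)$ (which is why the case $n=5$ gives $3$ on the right-hand side instead of the $2$ that appeared in Lemma~\ref{n3} via $\tan(\pi/4)=1$), and the observation that $i_X,\mu,e\in\ZZ_{>0}$ so the factorizations of $3$ and $5$ are completely rigid. No further geometric input (such as Kobayashi--Ochiai or the classification of del Pezzo manifolds) is needed at this stage, since all candidate values of $i_X$ here are automatically bounded by $n+1=6$.
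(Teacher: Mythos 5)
Your proposal is correct and is exactly the argument the paper intends: the paper gives no separate proof of Lemma~\ref{n5}, stating only that it follows ``by the same way as in Lemma~\ref{n3}'', and your computation (using $\tan(\pi/6)=1/\sqrt{3}$ to get $\Delta=-\tau^2/3$, combining with $\Delta=\tau^2-\frac{4\tau}{e\mu}$ to obtain $(i_X\mu-2)e=3$, and solving the Diophantine equation) is precisely that adaptation. No issues.
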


Now we prove the remaining part of Theorem~\ref{MT}~$\rm (I)$.

\begin{pro}\label{n35} Under the same setting as in Assumptions~\ref{A}, there exists a rank $2$ vector bundle $\E'$ on $Y$ such that $Z=\PP_Y(\E')$.
\end{pro}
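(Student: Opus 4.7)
The plan is to produce a line bundle $L'$ on $Z$ whose restriction to a fiber $f'$ of $\phi$ has degree one. Once such an $L'$ is in hand, cohomology and base change (using that $h^{0}(\PP^{1},\O_{\PP^{1}}(1))=2$ is constant across the fibers of $\phi$) show that $\E':=\phi_{\ast}L'$ is locally free of rank two; the surjection $\phi^{\ast}\E' \twoheadrightarrow L'$ then produces a $Y$-morphism $Z \to \PP_{Y}(\E')$ which is fiberwise an isomorphism of $\PP^{1}$'s, hence a global isomorphism. Everything thus reduces to producing $L'$.

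Since $\pi$ is a projective bundle over a variety of Picard rank one, $\mathrm{Pic}(Z)=\pi^{\ast}\mathrm{Pic}(X)\oplus\ZZ L = \ZZ H \oplus \ZZ L$. A candidate of the form $a' H+b' L$ has degree $a'\mu+b'\beta$ on $f'$, where $\beta:=L\cdot f'$. By Bezout, such an integer combination hits $1$ precisely when $\gcd(\mu,\beta)=1$, so the proposition reduces to the arithmetic claim that $\mu$ and $\beta$ are coprime.

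To verify this coprimality I would enumerate the admissible quintuples $(n,i_{X},\mu,e,c_{1})$. By Proposition~\ref{key} we have $n\in\{2,3,5\}$; the case $n=2$ is already settled in Theorem~\ref{n2}, while Lemmas~\ref{n3} and \ref{n5} list the allowed triples $(i_{X},\mu,e)$ for $n=3,5$. Lemma~\ref{221} eliminates $c_{1}=-1$ in the single ambiguous triple $(i_X,\mu,e)=(2,2,1)$, and the integrality of $a,b$ in Lemma~\ref{ab} imposes the parity condition $(c_{1}+i_{X})\mu\equiv 0\pmod{2}$, pruning the list further. For each surviving quintuple, Lemma~\ref{ab} supplies $\beta=1+(c_{1}-i_{X})\mu/2\in\{-1,-2,-3,-4\}$, and one checks $\gcd(\mu,\beta)=1$ by inspection in each case.

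The main obstacle is not geometric but clerical: one must carefully discard the inconsistent parities, apply Lemma~\ref{221} in the single place where it is needed, and then run a uniform gcd check across the handful of remaining cases. The key geometric inputs — the enumeration coming from the two computations of $\Delta(\E)$ in Propositions~\ref{key} and \ref{delta}, together with the exclusion provided by Lemma~\ref{221} — have already been established, and once the arithmetic verification is done the passage from $L'$ to $\E'$ is entirely standard.
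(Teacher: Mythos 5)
Your proposal is correct and follows essentially the same route as the paper: reduce to finding a line bundle of degree one on the fibers of $\phi$, then use the enumeration of $(n,i_X,\mu,e)$ from Lemmas~\ref{n3}, \ref{n5} and \ref{221} together with the value $L.f'=1+\frac{(c_1-i_X)\mu}{2}$ from Lemma~\ref{ab}. The only cosmetic difference is that you phrase the case check as a $\gcd(\mu, L.f')=1$ condition via Bezout, whereas the paper exhibits an explicit combination $V$ of $H$ and $L$ in each case.
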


\begin{proof} If $n=2$, this follows from Theorem~\ref{n2}. Thus, we deal with the cases where $n=3$ and $5$. From the above lemmas, it follows that \begin{eqnarray}(n, i_X, \mu, e)&=&(3,4,1,1), (3,3,1,2), (3,2,2,1), (3,1,3,2), (3,1,4,1), (5,5,1,1),\nonumber \\ && (5,3,1,3), (5,1,5,1)~{\rm or}~(5,1,3,3). \nonumber
\end{eqnarray}
It is enough to find a line bundle $V$ on $Z$ which satisfies $V.f'=1$. Indeed, $\E':=\phi_{\ast}\mathscr{O}_Z(V)$ satisfies the property desired.
Hence it is sufficient to deal with the case where $\mu:=H.f' \neq 1$, that is, $(n, i_X, \mu, e)=(3,2,2,1), (3,1,3,2), (3,1,4,1), (5,1,5,1),$ and $ (5,1,3,3)$. 

Recall that $L.f'=1+{\frac{(c_1-i_X)\mu}{2}}$ due to Lemma~\ref{ab}. If $(n, i_X, \mu, e)=(3,2,2,1)$, then, by Lemma~\ref{221}, we have $c_1=0$. This means $L.f'=-1$. Hence $V:=H \otimes L$ satisfies $V.f'=1$. 
If $(n, i_X, \mu, e)=(3,1,3,2)$, $(5,1,5,1)$ or $(5,1,3,3)$, then we see that $L.f'=-2, -4$, $-2$, respectively. It turns out that $V:=H \otimes L$ satisfies $V.f'=1$. If $(n, i_X, \mu, e)=(3,1,4,1)$, then $L.f'=-1$ or $-3$. Hence we can take $H \otimes L^{\otimes 3}$ or $H \otimes L$ as $V$.
\end{proof}

According to this proposition, we see that $Z$ admits double $\PP^1$-bundle structures $\pi: Z \rightarrow X$ and $\phi: Z \rightarrow Y$. By symmetry of $X$ and $Y$, all the results on $X$ as above also hold for $Y$. By the same way as in Notation~\ref{nota}, we define rational numbers $c_1', c_2', d'$ and $ \Delta'$ for $Y$ and $\E'$. Here $\E'$ may be normalized. Moreover, $K_{\phi}$ and $L'$ stand for the relative canonical divisor and a divisor associated with the tautological line bundle of $\PP(\E')$, respectively. Then we define $\tau':=\tau(\E')$ and $\upsilon':=\upsilon(\E')$ as in Notation~\ref{nota}. 
Applying the argument as in \cite{MOS}, we complete the proof of Theorem~\ref{MT} as follows:

\begin{them}\label{conc} Under the same setting as in Assumptions~\ref{A} and the above, 
if $n \geq 3$, then $((X, \E), (Y, \E'))$ is isomorphic to
$((\PP^3, \N),(Q^3, \S))$ or $((Q^5, \C), (K(G_2), \Q))$ up to changing the pairs $(X, \E)$ and $(Y, \E')$. 
\end{them}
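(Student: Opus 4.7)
My plan is a finite case analysis guided by the numerical invariants of Section~4 together with the classification of Fano manifolds of Picard number one by index. By Proposition~\ref{n35} we now have the symmetric situation $Z = \PP_X(\E) = \PP_Y(\E')$, so Lemmas~\ref{n3} and \ref{n5} apply equally to $(X,\E)$ and to $(Y,\E')$, producing a finite list of admissible quadruples $(n, i_X, \mu, e)$ and $(n, i_Y, \mu', e')$ that must be paired compatibly via the common space $Z$.

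For each admissible index I would identify the underlying manifold. The Kobayashi--Ochiai theorem handles the extremes: $i_X = n+1$ forces $X \cong \PP^n$ and $i_X = n$ forces $X \cong Q^n$. For index $n-1$ (del Pezzo threefolds when $n=3$, Mukai manifolds when $n=5$) I would invoke the Iskovskikh/Mukai classification in the spirit of Lemma~\ref{221}; for the remaining low-index options I would imitate the factorization argument of Lemma~\ref{221}, using the Stein factorization of $\pi \circ \zeta : \nu^{\ast}Z \to X$ to produce a morphism $g : \PP^2 \to X$ and then extracting a contradiction from a degree computation together with the adjunction formula on the image. Once $X$ is fixed, the Chern classes are forced: $c_1 \in \{0,-1\}$ by normalization and $\Delta = -\tau^2\tan^2(\pi/(n+1))$ (Proposition~\ref{key}) with $\tau = i_X - 2/\mu$ (Proposition~\ref{tau}) determine $c_2$ modulo the choice of $\Sigma$; then stability (Proposition~\ref{tau}(ii)) together with the uniqueness results for stable rank~$2$ bundles with these invariants identifies $\E$ as the null-correlation bundle $\N$ on $\PP^3$, the spinor bundle $\S$ on $Q^3$, Ottaviani's Cayley bundle $\C$ on $Q^5$, or the restriction $\Q$ of the universal quotient on $K(G_2)$. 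The analogous analysis for $(Y, \E')$ fixes the partner.

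The last step is to match the two sides: by comparing the intersection-theoretic data on $Z$ supplied by Proposition~\ref{C} on both sides (equivalently, by matching $(i_X,\mu,e)$ with $(i_Y,\mu',e')$ through the relations coming from the double $\PP^1$-bundle structure), only the pairings $(\PP^3,Q^3)$ for $n=3$ and $(Q^5,K(G_2))$ for $n=5$ are compatible, after which \cite[Theorem~6.5]{MOS} applies verbatim. The main obstacle I expect is the elimination of the low-index alternatives $(i_X,\mu,e)=(1,3,2),(1,4,1)$ for $n=3$ and $(1,5,1),(1,3,3)$ for $n=5$: these allow the greatest flexibility in the classification of $X$ and cannot be ruled out by Chern class bookkeeping alone, so the exclusion demands the geometric arguments developed in \cite[\S 6]{MOS}, combining restrictions of $\E$ to minimal rational curves on $X$ with the incompatibility of the two nef rays on $Z$.
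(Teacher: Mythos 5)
Your outline has the right general shape (finite list of invariants from Lemmas~\ref{n3} and \ref{n5}, Kobayashi--Ochiai for the extreme indices, identification of $\E$ by its Chern classes among stable bundles), but it leaves the decisive step unresolved. You yourself flag the obstacle: eliminating the low-index triples $(i_X,\mu,e)=(1,3,2),(1,4,1)$ for $n=3$ and $(1,5,1),(1,3,3)$ for $n=5$, and you propose to do it by imitating the factorization argument of Lemma~\ref{221} or by importing the geometric arguments of \cite[\S 6]{MOS}. That is not a proof but a deferral of the hardest point, and the Lemma~\ref{221}-style argument does not obviously generalize (it relied on the precise classification of del Pezzo threefolds of degree $4$; for index-one Fano threefolds and fivefolds there is no comparably rigid target). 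Your assertion that these cases ``cannot be ruled out by Chern class bookkeeping alone'' is in fact wrong, and identifying the correct bookkeeping is exactly the content of the paper's proof.

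The paper's argument runs as follows. Express $H'$ and $L'$ in the basis $\{H,L\}$ of ${\rm Pic}(Z)$ using the intersection table with $f$ and $f'$. Since $\{H,L\}$ and $\{H',L'\}$ are both $\ZZ$-bases, the base-change matrix has determinant $\pm1$, which forces $\mu=\mu'$. Then $H'=\frac{\mu}{2}(-K_\pi+\tau H)$, and computing $d_Y/d_X=(H')^nH/\mu$ divided by $H^n(-K_\pi)/2$ together with $\sqrt{-\Delta}=\tau\tan(\pi/(n+1))$ gives $d_Y/d_X=\bigl(\tau\mu/(2\cos(\pi/(n+1)))\bigr)^{n-1}$. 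The symmetric formula for $d_X/d_Y$ multiplies against this to yield $(i_X\mu-2)(i_Y\mu-2)=\tau\tau'\mu^2=2$ for $n=3$ and $=3$ for $n=5$. This single Diophantine relation kills every low-index possibility at once and leaves only $(i_X,i_Y,\mu)=(4,3,1)$ resp.\ $(5,3,1)$, whence $X\cong\PP^3$ resp.\ $Q^5$ by Kobayashi--Ochiai; the rest (computing $c_1,c_2$ and citing uniqueness of $\N$ and $\C$ among stable bundles with those invariants, then reading off $(Y,\E')$) matches your plan. Without the determinant identity $\mu=\mu'$ and the symmetric degree-ratio relation, your case analysis cannot close.
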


\begin{proof} As we have seen in Proposition~\ref{tau}, $\tau=i_X-\frac{2}{\mu}$ and $\tau'=i_Y-\frac{2}{\mu'}$. Then we obtain the following table:
\begin{center}
\begin{tabular}{|c||c|c|c|c|}
\hline
 $ $ & $H$ & $H'$ & $L$ & $L'$ \\ \hline \hline
 $f$ & $0$ & $\mu'$ & $1$ &  $1+\frac{(c_1'-i_Y){\mu}'}{2}$   \\ \hline
 $f'$ & $\mu$ & $0$ & $1+\frac{(c_1-i_X){\mu}}{2}$ &  $1$ \\
   \hline
\end{tabular}

\end{center}  
This table represents intersection numbers of divisors in the first row and $f$ or $f'$. For example, $H.f=0$ and $H.f'=\mu$ etc. Applying this table, we obtain
\begin{eqnarray}
\left\{
\begin{array}{l}
H'=-\frac{\mu'}{2}(c_1-\tau)H+\mu'L \\
L'=\{-\frac{\mu'}{4}(c_1-\tau)(c_1'-\tau')+\frac{1}{\mu}\}H+\frac{\mu'}{2}(c_1'-\tau')L.
\end{array}
\right.
\end{eqnarray}

Since $\{H,L\}$ and $\{H',L'\}$ are $\ZZ$-bases of ${\rm Pic}(\PP(\E))$, the determinant of the matrix of base change is equal to $1$ or $-1$. This implies that $\mu=\mu'$. Hence we can write $H'=\frac{\mu}{2}(-K_{\pi}+\tau H)$.  Furthermore, we get
\begin{eqnarray}\label{8}
\frac{d_Y}{d_X}=(\frac{\mu}{2})^n\frac{(-K_\pi+\tau H)^nH/\mu}{-K_\pi H^n/2}=(\frac{\mu}{2})^{n-1}\frac{{\rm im}((\tau+\sqrt{\Delta})^n)}{\sqrt{-\Delta}}. 
\end{eqnarray} 
Since we have $\sqrt{-\Delta}=\tau {\rm tan}(\frac{\pi}{n+1})$, (\ref{8}) is equivalent to 
\begin{eqnarray}\label{}
\frac{d_Y}{d_X}=(\frac{\tau\mu}{2{\rm cos}(\pi/n+1)})^{n-1}. \nonumber
\end{eqnarray} 
By symmetry of $X$ and $Y$, we get a similar equation 
\begin{eqnarray}\label{}
\frac{d_X}{d_Y}=(\frac{\tau'\mu'}{2{\rm cos}(\pi/n+1)})^{n-1}. \nonumber
\end{eqnarray}
These equations imply 
\begin{eqnarray}\label{10}
(\frac{\tau\mu\tau'\mu'}{4{\rm cos}^2(\pi/n+1)})^{n-1}=1.
\end{eqnarray} 
Since $\tau, \tau'>0$ and $\mu=\mu'>0$, $(\ref{10})$ provides
\begin{eqnarray}\label{11}
(i_X\mu-2)(i_Y\mu-2)=\tau\tau'\mu^2=\left\{ \begin{array}{ll}
2 & (n=3) \\
3 & (n=5) \\
\end{array} \right.
\end{eqnarray}

We may assume that $i_X \geq i_Y$. From (\ref{11}), we have $(i_X,i_Y,\mu)=(4,3,1)$ provided $n=3$. Hence we see that $X \cong \PP^3$. Here we take a line on $X \cong \PP^3$ as a base $\Sigma$ of $N^2(X)_{\QQ}$. Then $d=1$. On the other hand, if $n=5$, then we have $(i_X,i_Y,\mu)=(5,3,1)$. Hence we obtain that $X \cong Q^5$. Here $H^4(X,Q^5) \cong \ZZ$ and we take its positive generator as a base $\Sigma$ of $N^2(X)_{\QQ}$. Then $d=1$. In both cases, easy calculations imply the following table:
\begin{center}
\begin{tabular}{|c||c|c|c|c|c|c|c|}
\hline
 $ n$ & $i_X$ & $d$ & $\mu$ & $\tau$ & $\Delta$ & $c_1$ & $c_2$ \\ \hline \hline
 $3$ & $4$ & $1$ & $1$ &  $2$ & $-4$ & $0$ & $1$   \\ \hline  
 $5$ & $5$ & $1$ & $1$ &  $3$ & $-3$ & $-1$ & $1$   \\ \hline  
\end{tabular}
\end{center}  

Since vector bundles $\N$ and $\C$ are determined by their Chern classes among stable bundles (see \cite[Lemma~4.3.2]{OSS} and \cite{Ota}), hence $(X,\E)$ is isomorphic to $(\PP^3, \N)$ or $(Q^5, \C)$. Then the structure of $(Y, \E')$ is well-known (for instance, see \cite[Proposition~2.6]{SW}, \cite[Example~6.4]{MOS} and \cite[1.3]{Ota}). Consequently, Theorem~\ref{MT} holds. 
\end{proof}

\section{Proof of Theorem~\ref{HM}}

Let $M$ be an $n$-dimensional Fano manifold of $\rho=1$ with nef tangent bundle. Assume that $M$ carries a rational curve $C$ such that $-K_M.C=3$. This assumption implies that $n \geq 2$. Let $\K$ be a minimal rational component of $M$ (see \cite[1.2]{Mok}) and $\pi: \U \rightarrow \K$ its universal family. Denote the evaluation map by $\iota: \U \rightarrow M$.  
\begin{lem}[{\cite[Lemma~1.2.1, Lemma~1.2.2, Corollary~1.3.1]{Mok}}] Under the above setting, the following holds:
\begin{enumerate}
\item $\K$ is a projective manifold of dimension $n$, 
\item every fiber of $\iota$ is isomorphic to $\PP^1$, and
\item $\pi: \U \rightarrow \K$ is a $\PP^1$-bundle.
\end{enumerate}
\end{lem}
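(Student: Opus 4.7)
The whole lemma follows from a single key observation: under the hypothesis that $T_M$ is nef and $-K_M.C=3$, every minimal rational curve is \emph{standard}, meaning that for a generically injective $f:\PP^1\rightarrow M$ parametrizing such a curve, one has
\begin{eqnarray}
f^{\ast}T_M\cong \mathscr{O}_{\PP^1}(2)\oplus \mathscr{O}_{\PP^1}(1)\oplus \mathscr{O}_{\PP^1}^{\oplus(n-2)}. \nonumber
\end{eqnarray}
First I would establish this splitting. Nefness of $T_M$ forces $f^{\ast}T_M$ to split as a sum $\bigoplus \mathscr{O}_{\PP^1}(a_i)$ with all $a_i \geq 0$ and $\sum a_i = -K_M.f_{\ast}[\PP^1]=3$. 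The derivative of $f$ provides an inclusion $T_{\PP^1}=\mathscr{O}_{\PP^1}(2)\hookrightarrow f^{\ast}T_M$, so one summand is at least $\mathscr{O}_{\PP^1}(2)$. The remaining summands have non-negative degrees summing to $1$, forcing the claimed splitting. In particular $f$ is an immersion and the curve is a smooth $\PP^1$.

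From the standard splitting I immediately get (i) and (iii). Since every summand of $f^{\ast}T_M$ has non-negative degree, $H^1(\PP^1,f^{\ast}T_M)=0$, so deformations of $f$ are unobstructed. The Hom scheme is smooth of dimension $h^0(\PP^1,f^{\ast}T_M)=3+2+(n-2)=n+3$ at $f$, and after quotienting by $\mathrm{Aut}(\PP^1)$ (of dimension $3$) we obtain that $\K$ is smooth of dimension $n$ at $[f]$; applying this to every $[C]\in \K$ gives (i). For (iii), every curve parametrized by $\K$ is a smooth $\PP^1$ embedded in $M$, so $\U\rightarrow \K$ is the universal family of a flat family of smooth rational curves, i.e.\ a $\PP^1$-bundle.

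For (ii), the fiber $\iota^{-1}(x)$ parametrizes pairs $([C],x)$ with $x\in C$. A tangent space computation using the standard splitting gives
\begin{eqnarray}
H^0(\PP^1, f^{\ast}T_M(-t))=H^0(\mathscr{O}_{\PP^1}(1)\oplus\mathscr{O}_{\PP^1}\oplus\mathscr{O}_{\PP^1}(-1)^{\oplus(n-2)}) \nonumber
\end{eqnarray}
of dimension $3$, and the corresponding $H^1$ vanishes, so $\iota^{-1}(x)$ is a smooth projective curve of dimension $1$. To upgrade this to ``isomorphic to $\PP^1$'', one uses the tangent map $\iota^{-1}(x)\rightarrow \PP(T_xM)$ sending a pointed curve to its tangent direction at $x$; this identifies $\iota^{-1}(x)$ with (the normalization of) the variety of minimal rational tangents $\C_x\subset \PP(T_xM)$. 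Mok's analysis of Fano manifolds with nef tangent bundle shows $\C_x$ is a linear subspace of $\PP(T_xM)$, hence a $\PP^1$ because it is one-dimensional.

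The step I expect to be the main obstacle is this final identification of $\iota^{-1}(x)$ with a projective line; dimension and smoothness alone do not suffice. The cleanest route is to invoke Mok's characteristic distribution argument: nefness of $T_M$ and standardness of the minimal rational curves propagate to force the tautological distribution on $\U$ to be integrable and its leaves to coincide with the fibers of $\iota$, while the linearity of the VMRT follows from a direct study of the second fundamental form under the same hypotheses. Once (ii) is known, the structure of $\U$ as a double $\PP^1$-bundle (over both $M$ via $\iota$ and $\K$ via $\pi$) puts us exactly in the setting of Theorem~\ref{MT}.
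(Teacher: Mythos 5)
The paper offers no proof of this lemma at all: it is quoted verbatim from Mok's paper \cite{Mok}, so your proposal is really an attempt to reconstruct Mok's argument, and measured against that it has two genuine gaps. First, the splitting-type step is a non sequitur: knowing that $T_{\PP^1}=\mathscr{O}_{\PP^1}(2)$ injects into $f^{\ast}T_M$ does not force a direct summand to \emph{equal} $\mathscr{O}_{\PP^1}(2)$ --- the summand receiving $df$ could be $\mathscr{O}_{\PP^1}(3)$, so nefness together with $\sum a_i=3$ leaves two possible types, $(2,1,0^{n-2})$ and $(3,0^{n-1})$. Excluding the second for the general member requires bend-and-break (type $(3,0^{n-1})$ gives $h^0(f^{\ast}T_M(-2))=2$, hence a nontrivial deformation fixing two points and a breaking of the curve), and excluding it for \emph{every} member --- which is what you need, since the immersion property and the tangent map at an arbitrary point depend on it --- needs the further input that nefness of $T_M$ makes every rational curve free, so any breaking would yield a dominating family of $-K_M$-degree less than $3$, contradicting the minimality of $\K$. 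None of this appears in your sketch. (Parts (i) and (iii) are less affected: $h^1(f^{\ast}T_M)=0$ and $h^0(f^{\ast}T_M)=n+3$ hold for either splitting type, so smoothness and the dimension count for $\K$ go through.)

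Second, and more seriously, your treatment of (ii) is both circular and factually off. Appealing to ``Mok's analysis of Fano manifolds with nef tangent bundle'' to conclude that $\C_x$ is a line is appealing to the very results you are asked to prove; and $\C_x$ is \emph{not} a linear subspace of $\PP(T_xM)$ in the cases that actually occur here --- for $Q^3$ it is a plane conic and for $K(G_2)$ a twisted cubic --- what is needed is only that $\iota^{-1}(x)$ is abstractly $\PP^1$. Your deformation computation does show that $\iota^{-1}(x)$ is a smooth projective curve of dimension one, and connectedness follows from the simple connectedness of $M$ via Stein factorization, but rationality still requires a genuine argument. One clean route, consistent with this paper, is the isotriviality argument of Proposition~\ref{MTI}(ii): if one fiber of $\iota$ had positive genus then all would, the induced families over the fibers of $\pi$ would be trivial by simple connectedness of $\PP^1$, and Lemma~\ref{V} (applicable because $M$ is Fano of $\rho=1$) would force all fibers of $\iota$ to have the same image in $\K$, contradicting surjectivity. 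As written, the passage from ``smooth one-dimensional fiber'' to ``$\PP^1$'' is the main missing idea, not a routine upgrade.
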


Since $\iota$ is a smooth morphism whose fibers are $\PP^1$, one can check that the second Betti numbers satisfy $b_2(\U)=b_2(M)+1$. This implies $b_2(\K)=1$. Furthermore, $\K$ is covered by rational curves which are images of fibers of $\iota$, that is, a uniruled manifold. Consequently, $\K$ is a Fano manifold of $\rho_{\K}=1$. Applying Theorem~\ref{MT}, we obtain Theorem~\ref{HM}.

\end{document}